\DeclareMathOperator{\divg}{div}
\newcommand{\ab}[2]{\langle#1,#2\rangle}
\newcommand{\Th}{\mathcal{T}_{h}}
\newcommand{\CA}{\mathcal{A}}
\newcommand{\vertiii}[1]{{\left\vert\kern-0.25ex\left\vert\kern-0.25ex\left\vert #1
    \right\vert\kern-0.25ex\right\vert\kern-0.25ex\right\vert}}
\newcommand{\pst}{\Pi_h\widetilde{\bm{\sigma}}}
\newcommand{\ps}{\Pi_h\bm{\sigma}}
\newcommand{\tps}{\widetilde{\Pi}_h\bm{\sigma}}
\newcommand{\pu}{P_h\bm{u}}
\newcommand{\st}{\widetilde{\bm{\sigma}}}
\newcommand{\s}{\bm{\sigma}}
\newcommand{\sh}{{\bm{\sigma}_h}}
\newcommand{\eh}{\bm{e}_h}
\newcommand{\uh}{\bm{u}_h}
\newcommand{\ut}{\widetilde{\bm{u}}}
\newcommand{\bu}{\bm{u}}
\newcommand{\ma}{\mathcal{A}}
\newcommand{\bt}{\bm{\tau}}
\newcommand{\btt}{\widetilde{\bm{\tau}}}
\newcommand{\bb}{\bm{b}}
\newcommand{\putt}{P_h\widetilde{\bm{u}}}
\newcommand{\bv}{\bm{v}}
\newcommand{\bxi}{\bm{\xi}}
\newcommand{\Vh}{\bm{V}_h}
\newcommand{\bxih}{\bm{\bxi}_h}
\newcommand{\pih}{\bm{\phi}_h}
\numberwithin{theorem}{section}
\numberwithin{equation}{section}
\numberwithin{definition}{section}
\numberwithin{remark}{section}
\numberwithin{lemma}{section}
\begin{document}

\title{Superconvergent pseudostress-velocity finite element methods for the Oseen and Navier--Stokes equations}

\author{Xi Chen\and Yuwen Li
}

\institute{X. Chen \at
              Department of Mechanical Engineering, Pennsylvania State University, University Park, PA 16802, USA \\
              \email{xbc5027@psu.edu}   
           \and
           Y. Li \at
              Department of Mathematics, The Pennsylvania State University, University Park, PA 16802, USA\\ \email{yuwenli925@gmail.com}}

  \date{Received:  \  / Accepted: date}
  \maketitle
  
  \begin{abstract}
  We present a priori and superconvergence error estimates of mixed finite element methods for the pseudostress-velocity formulation of the Oseen equation. In particular, we derive superconvergence estimates for the velocity  and a priori error estimates under unstructured grids, and obtain superconvergence results for the pseudostress under certain structured grids. A variety of numerical experiments validate the theoretical results and illustrate the effectiveness of the superconvergent recovery-based adaptive mesh refinement. It is also numerically shown that the proposed postprocessing yields apparent superconvergence in a benchmark problem for the incompressible Navier--Stokes equation.
  \keywords{pseudostress\and superconvergence\and supercloseness\and postprocessing\and Oseen equation\and Navier--Stokes equation}
\subclass{65N12\and 65N15\and 65N30}
\markboth{X.~Chen, Y.~Li}{Superconvergent pseudostress-velocity mixed methods for the Oseen and NS equations}
\end{abstract}

\section{Introduction}
In fluid mechanics, the Oseen equations describe the flow of a viscous and incompressible fluid at small Reynolds numbers. Instead of dropping the advective term completely, the Oseen approximation linearizes the advective acceleration term $\bm{u}\cdot\nabla\bm{u}$ to $\bm{b}\cdot\nabla\bm{u}$, where $\bm{b}$ represents the velocity at large distance. As a result it provides a lowest-order solution that is uniformly valid everywhere in the flow field. Since the Oseen equations partly account for the inertia terms (at large distance), they have better approximation in the far field while keeping the same order of accuracy as Stokes approximation near the body, see, e.g., \cite{kundu2015fluid}. Mathematically speaking, the Oseen equation can be viewed as a linearized Navier--Stokes equation arising from fixed point iteration. 

Let $\Omega\subset\mathbb{R}^d$ be a bounded Lipschiz domain with $d\in\{2,3\}$,  $\bm{b}, \bm{f}: \Omega\rightarrow\mathbb{R}^d$, $c: \Omega\rightarrow\mathbb{R}$ and $\bm{g}: \partial\Omega\rightarrow\mathbb{R}^d$ be given data. Let $\bm{u}: \Omega\rightarrow\mathbb{R}^d$ denote the velocity field and $p:\Omega\rightarrow\mathbb{R}$ denote the pressure subject to the zero-mean constraint
\begin{equation}\label{pmean}
\int_\Omega pdx=0.    
\end{equation}
The Oseen equation under the Dirichlet boundary condition is
\begin{subequations}\label{Oseen1}
    \begin{align}
    -\nu\Delta\bm{u}+\bm{b}\cdot\nabla\bm{u}+c\bm{u}+\nabla p&=\bm{f}\text{ in }\Omega,\label{oseeneqa}\\
    \nabla\cdot\bm{u}&=0\text{ in }\Omega,\label{divu}\\
    \bm{u}&=\bm{g}\text{ on }\partial\Omega,
\end{align}
\end{subequations}
where $\nu>0$ is the viscosity constant.
As a default, vectors such as $\bm{b}, \bm{f}, \bm{u}, \bm{g}$ are always arranged as columns unless confusion arises. In \eqref{oseeneqa},  $\nabla\bm{u}$ is the Jacobian matrix of $\bm{u},$ and we adopt the convention $\bm{b}\cdot\nabla\bm{u}:=(\nabla\bm{u})\bm{b}$. 
When $\bm{b}=\bm{0}$ (resp.~$\bm{b}=\bm{0}, c=0$), \eqref{Oseen1} reduces to the Brinkman equation (resp.~Stokes equation). Replacing $\bm{b}$ with $\bm{u}$ in \eqref{Oseen1} yields the Navier--Stokes equation 
\begin{subequations}\label{NSE1}
    \begin{align}
    -\nu\Delta\bm{u}+\bm{u}\cdot\nabla\bm{u}+\nabla p&=\bm{f}\text{ in }\Omega,\\
    \nabla\cdot\bm{u}&=0\text{ in }\Omega,\\
    \bm{u}&=\bm{g}\text{ on }\partial\Omega.
\end{align}
\end{subequations}
Numerical analysis of Oseen, Brinkman, Stokes, or Navier--Stokes equations based on the velocity-pressure formulation is extensive, see, e.g., \cite{GiraultRaviart1986,BoffiBrezziFortin2013,GuzmanNeilan2014,FalkNeilan2013} for conforming mixed methods,   \cite{ccs2006,CCNP2013,CockburnSayas2014,CesmeliogluCockburnQiu2017,FuQiu2019} for hybridized discontinuous Galerkin methods, \cite{Mulin2014,WangYe2016,LLC2016} for weak Galerkin methods, and \cite{ABMV2014} for virtual element methods.

Let $\bm{I}\in\mathbb{R}^{d\times d}$ denote the identity matrix. The nonsymmetric pseudostress and symmetric stress are
\begin{align*}
    \bm{\sigma}&:=\nu\nabla\bm{u}-p\bm{I},\\
\bm{\sigma}^S&:=\frac{\nu}{2}(\nabla\bm{u}+(\nabla\bm{u})^\top)-p\bm{I},
\end{align*}
respectively.
In a series of papers \cite{Caiwang2007,CaiWangZhang2010a,CaiTongVassilevskiWang2010b}, Cai et al.~proposed the pseudostress-velocity formulation of the Stokes and Navier--Stokes equations, which could be discretized by classical Raviart--Thomas or Brezzi--Douglas--Marini elements without any stabilization term, see \eqref{mixed}. Let $\text{Tr}\bm{\sigma}$ be the trace of the matrix $\bm{\sigma}$ and
$$\mathcal{A}\bm{\sigma}:=\bm{\sigma}-\frac{1}{d}(\text{Tr}\bm{\sigma})\bm{I}$$
denote the deviatoric part of $\bm{\sigma}$. 
Direct calculation shows that the Oseen equation \eqref{Oseen1} equation is equivalent to the following pseudostress-velocity formulation \begin{equation}\label{Oseen}
    \begin{aligned}
  \mathcal{A}\bm{\sigma}&=\nu\nabla\bm{u},\\ -\nabla\cdot\bm{\sigma}+\nu^{-1}\bm{b}\cdot\mathcal{A}\bm{\sigma}+c\bm{u}&=\bm{f}
    \end{aligned}
\end{equation}
subject to the constraints $$\bm{u}|_{\partial\Omega}=\bm{g},\quad\int_\Omega\text{Tr}\bm{\sigma}dx=0.$$ 
Here $\nabla\cdot\bm{\sigma}$ is the row-wise divergence of the matrix-valued function $\bm{\sigma}$.

In contrast to extensive numerical results on velocity-pressure formulation of Stokes-related equations, numerical analysis of pseudostress-based methods is restricted to classical mixed methods \cite{CaiWangZhang2010a,Gatica2014}, virtual element methods \cite{Caceres2017,Caceres2017b},  discontinuous Galerkin methods \cite{Qian2019}, and adaptive mixed methods \cite{CKP2011,CGS2013,HuYu2018,Li2020MCOM} for the Stokes or Brinkman equation. There are a few works  devoted to the mixed formulation of the Oseen equation \eqref{Oseen}, see, e.g., \cite{Park2014} for a lowest order upwinded mixed method on rectangular meshes and \cite{BarriosCasconGonzalez2017,BarriosCasconGonzalez2020} for least-squares mixed methods. It is the purpose of this paper to shed light on a priori and superconvergence analysis of pseudostress-velocity mixed methods for the Oseen and Navier--Stokes equations.

In this paper, we shall develop a priori and superconvergence error estimates for \eqref{Oseen}. To the best of our knowledge, even a priori error estimates of \eqref{Oseen} is not available in literature. We emphasize that the presence of lower order terms is a major challenge in error analysis of mixed methods, see, e.g.,   \cite{DouglasRoberts1985,Demlow2002} for elliptic equations with lower order terms and \cite{ArnoldLi2017} for the perturbed Hodge--Laplace equation. For instance, due to the convection term $\bm{b}\cdot\mathcal{A}\bm{\sigma}$ and variable coefficient $c$, the construction of the discrete inf-sup condition is far from obvious. 
Hence we shall use the duality argument to derive a priori error estimates, which also yields improved decoupled error estimates.   It is noted that mixed methods for the scalar elliptic equation
    \begin{equation}\label{elliptic}
    -\nabla\cdot(\bm{A}\nabla u+\bm{b}u)+cu=f
    \end{equation}
    have been analyzed in \cite{DouglasRoberts1985,Demlow2002}, where $\bm{A}$ is a uniformly elliptic matrix-valued coefficient. In contrast, 
    the deviatoric operator $\CA$ in \eqref{Oseen1} is singular, which is a major difficulty in our analysis. 

    Furthermore, we shall derive  superconvergence results for $\bm{\sigma}$ based  on  postprocessing and a new estimate on $\|\Pi_h\bm{\sigma}-\bm{\sigma}_h\|$ for several families of RT elements. Even for the Stokes equation, such estimates are not known in literature. 
The superconvergence postprocessing procedure for $\bm{\sigma}_h$ and $\bm{u}_h$ is element-wise and flexible,  see Section \ref{secpost} and see, e.g.,  \cite{ZZ1992a,CKP2011,BankXu2003a,ZhangNaga2005,HuangLiWu2013,BankLi2019,Li2021JSC} for  related postprocessing schemes in the literature.  Since the postprocessing scheme is independent of the underlying physical model, we also test the proposed  postprocessing in the benchmark problem for steady incompressible Navier--Stokes equation, where apparent superconvergence phenomena in $\bm{\sigma}$ and $\bm{u}$ are observed.
As for superconvergence of Stokes-type equations in velocity-pressure form, readers are referred to e.g.,   \cite{WangYe2001,Ye2001,CuiYe2009} for superconvergence by two-grid $L^2$-projections,  \cite{Pan1997,Eichel2011,LN2008} for superconvergent recovery of lowest-order methods, and \cite{CockburnSayas2014,FuQiu2019} for superconvergent hybridized discontinuous Galerkin methods.

\subsection{Preliminary notation}
Given a vector space $X$, let $[X]^n$ denote the Cartesian product of $n$ copies of $X$ and $[X]^{n\times n}$ the space of $n\times n$ matrices whose components are contained in $X$. Let $H(\divg,\Omega)=\{\bm{v}\in [L^2(\Omega)]^d: \nabla\cdot\bm{v}\in L^2(\Omega)\}$ be the usual $H(\divg)$ space. Let 
\begin{align*}
\bm{V}&=[L^2(\Omega)]^d,\\
    \bm{\Sigma}&=\{\bm{\tau}\in[L^2(\Omega)]^{d\times d}: \bm{\tau}_i\in H(\divg,\Omega),~1\leq i\leq d,~\int_\Omega\text{Tr}\bm{\tau}dx=0\},
\end{align*}
where $\bm{\tau}_i$ is the $i$-th row of $\bm{\tau}.$  For scalar-, vector-, or matrix-valued functions, we use $(\cdot,\cdot)$ and $\langle\cdot,\cdot\rangle$ to denote the $L^2(\Omega)$- and $L^2(\partial\Omega)$-inner product, respectively.
The variational formulation of \eqref{Oseen} seeks $\bm{\sigma}\in\bm{\Sigma}$ and $\bm{u}\in\bm{V}$ such that
\begin{equation}\label{cts}
    \begin{aligned}
           (\mathcal{A}\bm{\sigma},\bm{\tau})+\nu(\nabla\cdot\bm{\tau},\bm{u})&=\nu\ab{\bm{g}}{\bm{\tau}\bm{n}},\quad\bm{\tau}\in\bm{\Sigma},\\ -\nu(\nabla\cdot\bm{\sigma},\bm{v})+(\bm{b}\cdot\mathcal{A}\bm{\sigma},\bm{v})+\nu(c\bm{u},\bm{v})&=\nu(\bm{f},\bm{v}),\quad\bm{v}\in\bm{V},
    \end{aligned}
\end{equation}
where $\bm{n}$ is the outward normal to $\partial\Omega.$

Let $\Th$ be a conforming simplicial or cubical partition of $\Omega$. Given an element $K\in\Th,$ $r_K$ and $\rho_K$ denote radii of circumscribed and inscribed spheres of $K$, $h_K$ is the diameter of $K$, and $h:=\max_{K\in\Th}h_K<1$ is the mesh size of $\Th.$ We assume that the aspect ratio of elements in $\Th$ is uniformly bounded, i.e.,
$$\max_{K\in\Th}r_K/\rho_K:={C}_{\text{mesh}}<\infty$$
for some absolute constant $C_{\text{mesh}}$.
Given an integer $r\geq0,$ let $\widetilde{\bm{\Sigma}}_r(K)$, $V_r(K)$ be suitable finite-dimensional vector spaces defined on $K$ and 
\begin{align*}
    \widetilde{\bm{\Sigma}}_h&:=\{\bm{\tau}\in H(\divg,\Omega): \bm{\tau}|_K\in\widetilde{\bm{\Sigma}}_r(K)~\text{for}~K\in\Th\},\\
    V_h&:=\{v\in L^2(\Omega): v|_K\in V_r(K)~\text{for}~K\in\Th\}.
\end{align*}
For instance, when $\Th$ is a simplicial partition, one could take 
\begin{equation}\label{RT}
    \widetilde{\bm{\Sigma}}_r(K):=[\mathcal{P}_{r}(K)]^d+\mathcal{P}_{r}(K)\bm{x},\quad V_r(K):=\mathcal{P}_r(K),
\end{equation}
where $\mathcal{P}_{r}(K)$ is the space of polynomials of degree at most $r$ on $K$, and $\bm{x}=(x_1,\ldots,x_d)^\top$ denotes the position vector. In this case, the corresponding $\widetilde{\bm{\Sigma}}_h\times V_h$ is the classical Raviart--Thomas (RT) \cite{RaviartThomas1977} space. Another possible choice is
\begin{equation}\label{BDM}
    \widetilde{\bm{\Sigma}}_r(K):=[\mathcal{P}_{r+1}(K)]^d,\quad V_r(K):=\mathcal{P}_r(K),
\end{equation}
which corresponds to the Brezzi--Douglas--Marini (BDM) \cite{BrezziDouglasMarini1985} element.

Given a hypercube $K\subset\mathbb{R}^d$, let $\mathcal{Q}_r(K)$ denote the space of polynomials on $K$ of degree at most $r$ in $x_i$ with $1\leq i\leq d$.  For a cubical mesh $\Th$, the pair
$\widetilde{\bm{\Sigma}}_h\times V_h$ can be the cubical RT element using the shape functions
\begin{equation}\label{rectRT}
    \widetilde{\bm{\Sigma}}_r(K):=[\mathcal{Q}_r(K)]^d+\bm{x}\mathcal{Q}_r(K),\quad V_r(K):=\mathcal{Q}_r(K).
\end{equation}

Let $\widetilde{\Pi}_h: [H^1(\Omega)]^d\rightarrow\widetilde{\bm{\Sigma}}_h$ be the canonical interpolation onto $\widetilde{\bm{\Sigma}}_h$ determined by the degrees of freedom of $\widetilde{\bm{\Sigma}}_h$. Let $P_h: L^2(\Omega)\rightarrow V_h$ be the $L^2$-projection onto $V_h,$ i.e.,
\begin{equation}\label{defP}
(P_hv,w)=(v,w),\quad v\in L^2(\Omega),~w\in V_h.    
\end{equation}
Throughout the rest of this paper, we assume the commutativity property
\begin{equation}\label{com1}
    \nabla\cdot\widetilde{\Pi}_h=P_h\nabla\cdot,
\end{equation}
which is satisfied by at least the aforementioned RT, BDM, and cubical RT elements. Readers are referred to e.g., \cite{BrezziDouglasMarini1985,BDDF1987,BDFM1987} for other cubical or rectangular mixed elements that satisfy \eqref{com1}.
Let $\bm{\Sigma}_h$ be the matrix version of $\widetilde{\bm{\Sigma}}_h$, i.e.,
\begin{align*}
    \bm{\Sigma}_h:=\{\bm{\tau}\in\bm{\Sigma}: \bm{\tau}_i\in \widetilde{\bm{\Sigma}}_h\text{ for }1\leq i\leq d\}.
\end{align*}
Let $\bm{V}_h$ be the vector-valued broken piecewise polynomial space
\begin{align*}
    \bm{V}_h:=\{\bm{v}=(v_1,v_2,\ldots,v_d)^\top\in\bm{V}: {v}_i\in{V}_h\text{ for }1\leq i\leq d\}.
\end{align*}
The mixed method for \eqref{Oseen} seeks
$(\bm{\sigma}_h,\bm{u}_h)\in\bm{\Sigma}_h\times\bm{V}_h$ satisfying
\begin{equation}\label{mixed}
    \begin{aligned} 
    (\mathcal{A}\sh,\bm{\tau})+\nu(\nabla\cdot\bm{\tau},\uh)&=\nu\ab{\bm{g}}{\bm{\tau}\bm{n}},\quad\bm{\tau}\in\bm{\Sigma}_h,\\
    -\nu(\nabla\cdot\sh,\bm{v})+(\bm{b}\cdot\mathcal{A}\sh,\bm{v})+\nu(c\uh,\bm{v})&=\nu(\bm{f},\bm{v}),\quad\bm{v}\in\bm{V}_h.
    \end{aligned}
\end{equation}

The outline of this work is as follows. In Section  \ref{secapriori}, we develop a apriori error estimates of the mixed method \eqref{mixed} and supercloseness estimate on $\|P_h\bm{u}-\bm{u}_h\|$. In Section \ref{secsuper}, we develop supercloseness estimate on $\|\Pi_h\bm{\sigma}-\bm{\sigma}_h\|$. Section \ref{secpost} is devoted to superconvergent  postprocessing procedures. Numerical results for smooth, singular, convection-dominated, and nonlinear problems are presented in Section \ref{secexp}. Throughout the rest of this paper,
we say $A\lesssim B$ provided $A\leq CB,$ 
where $C>0$ is a generic constant dependent solely on $\bm{b}$, $c$, $\Omega$, $C_{\text{mesh}}$, $\nu$. In the error analysis, we assume $\nu=1$ without loss of generality.

\section{A priori error estimates}\label{secapriori}
Let $\|\cdot\|_r$ denote the $\|\cdot\|_{H^r(\Omega)}$-norm, $|\cdot|_r$ be the $|\cdot|_{H^r(\Omega)}$-semi norm,  $\|\cdot\|:=\|\cdot\|_0$ be the $\|\cdot\|_{L^2(\Omega)}$-norm, and $\vertiii{\cdot}$ be the  $H(\divg)$-norm
$$\vertiii{\bm{\tau}}^2:=\|\bm{\tau}\|^2+\|\nabla\cdot\bm{\tau}\|^2.$$
In this section, we derive a priori error estimates of the mixed method \eqref{mixed} in Theorem \ref{apriori}. The same analysis implies that \eqref{mixed} admits a unique solution provided $h$ is sufficiently small.

The operator $\mathcal{A}$ is singular and satisfies
\begin{equation}\label{adjA}
(\mathcal{A}\bm{\tau}_1,\bm{\tau}_2)=(\bm{\tau}_1,\mathcal{A}\bm{\tau}_2)=(\mathcal{A}\bm{\tau}_1,\mathcal{A}\bm{\tau}_2),\quad\bm{\tau}_1, \bm{\tau}_2\in[L^2(\Omega)]^{d\times d}.
\end{equation}
In addition, it holds that (see \cite{Caiwang2007,ArnoldDouglasGupta1984}) 
\begin{equation}\label{rb}
    \|\bm{\tau}\|\lesssim\|\bm{\tau}\|_\CA+\|\nabla\cdot\bm{\tau}\|_{-1},\quad\bm{\tau}\in\bm{\Sigma},
\end{equation}
where $\|\bm{\tau}\|_\CA:=(\ma\bm{\tau},\bm{\tau})^\frac{1}{2}.$ The inequality \eqref{rb} is a key ingredient in the analysis of pseudostress-based methods. 
Define $\Pi_h: [H^1(\Omega)]^{d\times d}\rightarrow\bm{\Sigma}_h$ by
\begin{equation}
    \Pi_h\bt:=\widetilde{\Pi}_h\bt-\frac{1}{d|\Omega|}\left(\int_{\Omega}\text{Tr}\widetilde{\Pi}_h\bm{\tau}dx\right)\bm{I},
\end{equation}
where $\widetilde{\Pi}_h$ is applied to each row of $\bm{\tau}.$ By abuse of notation, we may also use $P_h: [L^2(\Omega)]^d\rightarrow\Vh$ to denote the component-wise $L^2$-projection. It follows from \eqref{com1} that
\begin{equation}\label{com}
    \nabla\cdot\Pi_h\bm{\tau}=P_h\nabla\cdot\bm{\tau},\quad\forall\bm{\tau}\in [H^1(\Omega)]^{d\times d}.
\end{equation}
For convenience, we introduce the interpolation errors
$$\bm{\rho}_h:=\bm{u}-P_h\bm{u},\quad\bm{\zeta}_h:=\bm{\sigma}-\Pi_h\bm{\sigma},$$
which can be easily estimated by (see \cite{CaiTongVassilevskiWang2010b})
\begin{subequations}\label{approx}
    \begin{align}
    \|\bm{v}-P_h\bm{v}\|&\lesssim h^s|\bm{v}|_s,\\
\|\bm{\tau}-\Pi_h\bm{\tau}\|&\lesssim h^s|\bm{\tau}|_s,\\
    \|\nabla\cdot(\bm{\tau}-{\Pi}_h\bm{\tau})\|&\lesssim h^s|\divg\bm{\tau}|_s,
    \end{align}
\end{subequations}
where $0\leq s\leq r+1$, $\int_\Omega\text{Tr}\bm{\tau}dx=0$, and $\bm{v}, \bm{\tau}$ satisfy the regularity indicated by the right hand sides. For the BDM element \eqref{BDM}, it additionally holds that
\begin{equation}\label{approxBDM}
\|\bm{\tau}-\Pi_h\bm{\tau}\|\lesssim h^s|\bm{\tau}|_s,\quad0\leq s\leq r+2.
\end{equation}
The essential errors to be estimated are
\begin{equation*}
    \bm{e}_h:=\pu-\uh,\quad\bm{\xi}_h:=\Pi_h\bm{\sigma}-\bm{\sigma}_h.
\end{equation*}
Subtracting \eqref{mixed} from \eqref{cts} (with $\nu=1$), we obtain the error equation
\begin{subequations}\label{error}
    \begin{align}
    (\ma(\s-\sh),\bt_h)+(\nabla\cdot\bm{\tau}_h,\bu-\uh)&=0,\quad\bm{\tau}_h\in\bm{\Sigma}_h,\label{error1}\\
    -(\nabla\cdot(\bm{\sigma}-\sh),\bm{v}_h)+(\bm{b}\cdot\mathcal{A}(\bm{\sigma}-\sh),\bm{v}_h)+(c(\bm{u}-\uh),\bm{v}_h)&=0,\quad\bm{v}_h\in\bm{V}_h.\label{error2}
    \end{align}
\end{subequations}
To estimate $\|\bm{e}_h\|$, we consider the adjoint problem of \eqref{cts}: Find $\widetilde{\bm{\sigma}}\in\bm{\Sigma}$ and $\widetilde{\bm{u}}\in\bm{V}$ such that
\begin{subequations}\label{dual_problem}
    \begin{align} (\mathcal{A}\widetilde{\bm{\sigma}},\bm{\tau})+(\nabla\cdot\bm{\tau},\widetilde{\bm{u}})-(\bm{b}\cdot\mathcal{A}\bm{\tau},\widetilde{\bm{u}})&=0,\quad\bm{\tau}\in\bm{\Sigma},\label{dual1}\\ -(\nabla\cdot\widetilde{\bm{\sigma}},\bm{v})+(c\widetilde{\bm{u}},\bm{v})&=(\eh,\bm{v})\quad\bm{v}\in\bm{V}.\label{dual2}
    \end{align}
\end{subequations}
In the analysis, it is  assumed that \eqref{dual_problem} admits the elliptic regularity
\begin{equation}\label{duality_assumption}
\|\widetilde{\bm{\sigma}}\|_1+\|\widetilde{\bm{u}}\|_1\lesssim\|\eh\|.
\end{equation}
The next lemma is a supercloseness estimate which is crucial for both a priori and superconvergence error analysis. 
\begin{lemma}\label{superu} It holds that
\begin{equation*}
           \|\eh\|\lesssim h\big(\vertiii{\s-\sh}+\|\bu-\uh\|+\|\bm{\rho}_h\|\big).
\end{equation*}
\end{lemma}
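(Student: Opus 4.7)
The plan is a duality argument based on the dual problem \eqref{dual_problem}, whose data is $\bm{e}_h$ itself. Testing \eqref{dual2} with $\bm{v}=\bm{e}_h\in\bm{V}_h$ yields
\[
\|\bm{e}_h\|^2 = -(\divg\widetilde{\bm{\sigma}},\bm{e}_h) + (c\widetilde{\bm{u}},\bm{e}_h).
\]
The commuting property \eqref{com} lets me replace $\divg\widetilde{\bm{\sigma}}$ by $\divg\Pi_h\widetilde{\bm{\sigma}}$, and then \eqref{error1} with $\bm{\tau}_h=\Pi_h\widetilde{\bm{\sigma}}$ (valid because $\int_\Omega\tr\widetilde{\bm{\sigma}}\,dx=0$), combined with the orthogonality $(\divg\Pi_h\widetilde{\bm{\sigma}},\bm{u}-\bm{u}_h)=(\divg\Pi_h\widetilde{\bm{\sigma}},\bm{e}_h)$, rewrites the first term as $(\CA(\bm{\sigma}-\bm{\sigma}_h),\Pi_h\widetilde{\bm{\sigma}})$. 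Splitting $\Pi_h\widetilde{\bm{\sigma}}=\widetilde{\bm{\sigma}}-(\widetilde{\bm{\sigma}}-\Pi_h\widetilde{\bm{\sigma}})$, the $\widetilde{\bm{\sigma}}$ piece is handled by testing \eqref{dual1} with $\bm{\tau}=\bm{\sigma}-\bm{\sigma}_h\in\bm{\Sigma}$ and using the self-adjointness \eqref{adjA}, which expresses $(\CA(\bm{\sigma}-\bm{\sigma}_h),\widetilde{\bm{\sigma}})$ in terms of $\divg(\bm{\sigma}-\bm{\sigma}_h)$ and $\widetilde{\bm{u}}$.

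Next I use the error equation \eqref{error2} with $\bm{v}_h=P_h\widetilde{\bm{u}}$ to trade $(\divg(\bm{\sigma}-\bm{\sigma}_h),P_h\widetilde{\bm{u}})$ for $(\bm{b}\cdot\CA(\bm{\sigma}-\bm{\sigma}_h),P_h\widetilde{\bm{u}})+(c(\bm{u}-\bm{u}_h),P_h\widetilde{\bm{u}})$. After this substitution, every surviving divergence and convection pairing is against $\widetilde{\bm{u}}-P_h\widetilde{\bm{u}}$ or $\widetilde{\bm{\sigma}}-\Pi_h\widetilde{\bm{\sigma}}$. The residual zero-order contribution $\mathcal{R}=(c\widetilde{\bm{u}},\bm{e}_h)-(c(\bm{u}-\bm{u}_h),P_h\widetilde{\bm{u}})$ is then reorganized using the symmetry of the bilinear form $(c\cdot,\cdot)$ and the orthogonalities $\bm{\rho}_h\perp\bm{V}_h$ and $\bm{e}_h\in\bm{V}_h$ into
\[
\mathcal{R}=(c\bm{e}_h,\widetilde{\bm{u}}-P_h\widetilde{\bm{u}})-(cP_h\widetilde{\bm{u}},\bm{\rho}_h),
\]
and the second piece is treated by inserting $P_h(c\widetilde{\bm{u}})$ inside the first argument, which exposes an $O(h\|\widetilde{\bm{u}}\|_1)$ approximation error paired with $\bm{\rho}_h$.

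With every remaining inner product of the form (residual)$\times$(interpolation error), Cauchy--Schwarz together with the approximation bounds \eqref{approx} and the regularity \eqref{duality_assumption} (which dominates $\|\widetilde{\bm{\sigma}}\|_1+\|\widetilde{\bm{u}}\|_1$ by $\|\bm{e}_h\|$) produces
\[
\|\bm{e}_h\|^2 \lesssim h\vertiii{\bm{\sigma}-\bm{\sigma}_h}\,\|\bm{e}_h\| + h\|\bm{e}_h\|\,\|\bm{\rho}_h\| + h\|\bm{e}_h\|^2,
\]
after which the last term is absorbed on the left for sufficiently small $h$; the additional $\|\bm{u}-\bm{u}_h\|$ term appearing in the statement can alternatively be produced by a direct estimate of $(c(\bm{u}-\bm{u}_h),\widetilde{\bm{u}}-P_h\widetilde{\bm{u}})$, which sidesteps the smallness-of-$h$ absorption. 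The main obstacle is purely algebraic bookkeeping: every surviving pairing must isolate one interpolation error, since only such a factor produces the extra $h$ required for supercloseness. No coercivity of the singular operator $\CA$ is used, but its self-adjointness \eqref{adjA} is essential for closing the loop.
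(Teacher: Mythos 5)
Your argument is correct and follows essentially the same route as the paper's proof: the same duality argument with the same test functions ($\Pi_h\widetilde{\bm{\sigma}}$ in \eqref{error1}, $\bm{\sigma}-\bm{\sigma}_h$ in \eqref{dual1}, $P_h\widetilde{\bm{u}}$ in \eqref{error2}, $\bm{e}_h$ in \eqref{dual2}), reducing every surviving pairing to one against an interpolation error. The only deviation is in the zero-order bookkeeping: the paper keeps the term $(c(\bm{u}-\bm{u}_h),\widetilde{\bm{u}}-P_h\widetilde{\bm{u}})$, which produces the $\|\bm{u}-\bm{u}_h\|$ contribution in the stated bound without any smallness-of-$h$ absorption, i.e., exactly the alternative you note at the end, and that is the route needed to prove the lemma as stated for all $h$.
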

\begin{proof}
Taking $\bm{v}=\eh$ in \eqref{dual2}, we obtain
\begin{equation}\label{error_eq}
    \|\eh\|^2=-(\nabla\cdot\widetilde{\bm{\sigma}},\eh)+(c\widetilde{\bm{u}},\eh).
\end{equation}
Using the definition \eqref{defP} and the property \eqref{com}, we have 
\begin{equation}\label{term1}
    \begin{aligned}
    &(\nabla\cdot\widetilde{\bm{\sigma}},\eh)=(P_h\nabla\cdot\st,\pu-\bm{u}+\bm{u}-\uh)\\
    &\quad=(P_h\nabla\cdot\st,\bm{u}-\uh)=(\nabla\cdot\pst,\bu-\uh).
    \end{aligned}
\end{equation}
Combining \eqref{error1} with $\bt_h=\pst$ implies that
\begin{equation}\label{term11}
    \begin{aligned}
    &(\nabla\cdot\pst,\bu-\uh)=-(\ma(\s-\sh),\pst)\\
    &\quad=(\ma(\s-\sh),\st-\pst)-(\ma(\s-\sh),\st).
    \end{aligned}
\end{equation}
Using \eqref{adjA} and setting $\btt=\s-\sh$ in \eqref{dual1}, we have
\begin{equation*}
    \begin{aligned}
           &(\mathcal{A}(\s-\sh),\st)=(\mathcal{A}\st,\s-\sh)\\
           &=-(\nabla\cdot(\s-\sh),\widetilde{\bm{u}})+(\bm{b}\cdot\mathcal{A}(\s-\sh),\widetilde{\bm{u}})\\
          &=-(\nabla\cdot(\s-\sh),\widetilde{\bm{u}}-P_h\widetilde{\bm{u}})-(\nabla\cdot(\s-\sh),P_h\widetilde{\bm{u}})+(\bm{b}\cdot\mathcal{A}(\s-\sh),\widetilde{\bm{u}}).
    \end{aligned}
\end{equation*}
It then follows from the above equation and  \eqref{error2} with $\bv_h=\putt$ that 
\begin{equation}\label{term12}
    \begin{aligned}
           &(\mathcal{A}(\s-\sh),\st)=-(\nabla\cdot(\s-\sh),\widetilde{\bm{u}}-P_h\widetilde{\bm{u}})+(\bm{b}\cdot\mathcal{A}(\s-\sh),\widetilde{\bm{u}})\\ &\quad-(\bm{b}\cdot\mathcal{A}(\bm{\sigma}-\sh),\putt)-(c(\bm{u}-\uh),\putt).
    \end{aligned}
\end{equation}
As a result of \eqref{term1}, \eqref{term11} and \eqref{term12}, we obtain
\begin{equation}\label{newterm1}
    \begin{aligned}
    (\nabla\cdot\widetilde{\bm{\sigma}},\eh)&=(\ma(\s-\sh),\st-\pst)+(\nabla\cdot(\s-\sh),\ut-\putt)\\
    &-(\bm{b}\cdot\mathcal{A}(\s-\sh),\widetilde{\bm{u}}-\putt)+(c(\bm{u}-\uh),\putt).
    \end{aligned}
\end{equation}
On the other hand, the second term on the right hand side of \eqref{error_eq} is
\begin{equation}\label{term2}
    \begin{aligned}
    (c\ut,\bm{e}_h)&=(c\ut,\pu-\bu+\bu-\uh)\\
    &=(c\ut-P_h (c\ut),\pu-\bu)+(c(\bu-\uh),\ut).
    \end{aligned}
\end{equation}
Finally with \eqref{newterm1} and \eqref{term2}, the error in \eqref{error_eq} can be written as
\begin{equation}\label{error_eq_final_1}
    \begin{aligned}
    &\|\eh\|^2=(\ma(\s-\sh),\pst-\st)-(\nabla\cdot(\s-\sh),\ut-\putt)\\
    &\quad+(\bb\cdot\ma(\s-\sh),\ut-\putt)+(c(\bu-\uh),\widetilde{\bm{u}}-\putt)\\
    &\quad+(c\ut-P_h(c\ut),\pu-\bu).
    \end{aligned}
\end{equation}
Using \eqref{error_eq_final_1}, \eqref{approx}, and the Cauchy--Schwarz inequality, we obtain
\begin{equation*}
    \|\eh\|^2\lesssim h\|\st\|_1\|\s-\sh\|+h\|\widetilde{\bm{u}}\|_1\big(\vertiii{\s-\sh}+\|\bm{u}-\bm{u}_h\|+\|\bm{\rho}_h\|\big).
\end{equation*}
Combining the above estimate  
with \eqref{duality_assumption} completes the proof.
\qed\end{proof}

Lemma \ref{superu} is a supercloseness estimate, i.e., $P_h\bm{u}$ and $\bm{u}_h$ are much closer than the distance predicted by standard a priori error estimates. However, a priori error estimates on $\|\bm{\sigma}-\bm{\sigma}_h\|$ and $\|\nabla\cdot(\bm{\sigma}-\bm{\sigma}_h)\|$ are not known at the moment. When deriving a priori error estimates, we need the $L^2$ and negative norm estimates of $\nabla\cdot\bm{\xi}_h.$
\begin{lemma}\label{divneg}
It holds that
\begin{subequations}
\begin{align}
\|\nabla\cdot \bxi_h\|&\lesssim\|\bm{\zeta}_h\|+\|\bxi_h\|+\|\eh\|+h\|\bm{\rho}_h\|,\label{divxil2}\\
\|\nabla\cdot\bm{\xi}_h\|_{-1}&\lesssim h\big(\vertiii{\bm{\zeta}_h}+\vertiii{\bxi_h}+\|\bm{\rho}_h\|\big)+\|\eh\|. \label{divxin1}
\end{align}
\end{subequations}
\end{lemma}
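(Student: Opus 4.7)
Both bounds share a single strategy: test the error equation \eqref{error2} against a carefully chosen element of $\bm{V}_h$ and exploit the orthogonality $\divg\bm{\zeta}_h=(I-P_h)\divg\bm{\sigma}\perp\bm{V}_h$ that follows from the commutativity \eqref{com}. For \eqref{divxil2} I take $\bm{v}_h=\divg\bm{\xi}_h\in\bm{V}_h$; the decomposition $\divg(\bm{\sigma}-\bm{\sigma}_h)=\divg\bm{\zeta}_h+\divg\bm{\xi}_h$ combined with this orthogonality turns the left-hand side of \eqref{error2} into $\|\divg\bm{\xi}_h\|^2$. Cauchy--Schwarz on the convection term contributes $(\|\bm{\zeta}_h\|+\|\bm{\xi}_h\|)\|\divg\bm{\xi}_h\|$, and splitting $\bm{u}-\bm{u}_h=\bm{\rho}_h+\bm{e}_h$ in the reaction term produces $\|\bm{e}_h\|\|\divg\bm{\xi}_h\|$ plus the $\bm{\rho}_h$ piece. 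For the latter I invoke $\bm{\rho}_h\perp\bm{V}_h$ to rewrite $(c\bm{\rho}_h,\divg\bm{\xi}_h)=(\bm{\rho}_h,(I-P_h)(c\,\divg\bm{\xi}_h))$ and bound it via the local estimate $\|(I-P_h)(cw_h)\|\lesssim h\|w_h\|$ valid for $w_h\in\bm{V}_h$ (using $\bar c_K w_h\in V_r(K)$ as competitor on each element), which supplies the $h$ attached to $\|\bm{\rho}_h\|$.

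For \eqref{divxin1}, pick $\bm{\phi}\in[H_0^1(\Omega)]^d$ and take $\bm{v}_h=P_h\bm{\phi}$ in \eqref{error2}. Since $\divg\bm{\xi}_h\in\bm{V}_h$ and $\divg\bm{\zeta}_h\perp\bm{V}_h$, the same orthogonality yields
\[
(\divg\bm{\xi}_h,\bm{\phi})=(\divg\bm{\xi}_h,P_h\bm{\phi})=(\bm{b}\cdot\mathcal{A}(\bm{\sigma}-\bm{\sigma}_h),P_h\bm{\phi})+(c(\bm{u}-\bm{u}_h),P_h\bm{\phi}).
\]
The reaction term is handled exactly as in the first step, contributing $h\|\bm{\rho}_h\|\|\bm{\phi}\|_1+\|\bm{e}_h\|\|\bm{\phi}\|_1$. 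The main obstacle is the convection term, which naively admits only the bound $\|\bm{\sigma}-\bm{\sigma}_h\|\|\bm{\phi}\|_1$ and thus lacks the required $h$. My key idea is to recast it as the matrix inner product $(\mathcal{A}(\bm{\sigma}-\bm{\sigma}_h),P_h\bm{\phi}\otimes\bm{b})$ and feed the discrete matrix $\bm{\tau}_h:=\Pi_h(\bm{\phi}\otimes\bm{b})\in\bm{\Sigma}_h$ into the \emph{first} error equation \eqref{error1}; since $\mathcal{A}\bm{I}=0$ and $\bm{I}$ has vanishing divergence, the trace correction hidden in $\Pi_h$ plays no role. Combining \eqref{error1} with $\divg\bm{\tau}_h\in\bm{V}_h$ and $\bm{\rho}_h\perp\bm{V}_h$ gives $(\mathcal{A}(\bm{\sigma}-\bm{\sigma}_h),\bm{\tau}_h)=-(\divg\bm{\tau}_h,\bm{e}_h)$, so that
\[
(\mathcal{A}(\bm{\sigma}-\bm{\sigma}_h),P_h\bm{\phi}\otimes\bm{b})=(\mathcal{A}(\bm{\sigma}-\bm{\sigma}_h),P_h\bm{\phi}\otimes\bm{b}-\bm{\tau}_h)-(\divg\bm{\tau}_h,\bm{e}_h).
\]
Two final estimates close the argument: $\|P_h\bm{\phi}\otimes\bm{b}-\bm{\tau}_h\|\lesssim h\|\bm{\phi}\|_1$, which follows from a triangle inequality using \eqref{approx} with $s=1$ on $\bm{\phi}\otimes\bm{b}\in[H^1(\Omega)]^{d\times d}$ together with the $L^2$-projection error for $\bm{\phi}$; and $\|\divg\bm{\tau}_h\|\lesssim\|\bm{\phi}\|_1$, which comes from \eqref{com} and the row-wise identity $\divg(\bm{\phi}\otimes\bm{b})=\bm{b}\cdot\nabla\bm{\phi}+\bm{\phi}\,\divg\bm{b}$. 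Taking the supremum over $\bm{\phi}$ delivers \eqref{divxin1}.
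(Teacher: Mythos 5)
Your proof is correct and follows essentially the same route as the paper: testing \eqref{error2} with $\divg\bm{\xi}_h$ (using the commutativity \eqref{com} and the $c-c_h$ cancellation to attach the factor $h$ to $\|\bm{\rho}_h\|$) for \eqref{divxil2}, and, for \eqref{divxin1}, dualizing against $\bm{\phi}\in[H_0^1(\Omega)]^d$ and transferring the convection term to \eqref{error1} through an interpolated matrix test function built from $\bm{\phi}\bm{b}^T$. The only cosmetic difference is that the paper inserts $\Pi_h\mathcal{A}(\bm{v}\bm{b}^T)$ while you insert $\Pi_h(\bm{\phi}\bm{b}^T)$ and discard the trace/identity part afterwards, which is equivalent since $\mathcal{A}$ annihilates multiples of $\bm{I}$ and constant multiples of $\bm{I}$ are divergence-free.
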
 
Lemme \ref{divneg} is also essential for proving the supercloseness estimate in Theorem \ref{supersigma} and the proof is postponed in the end of this section. In this work, we say $h$ is sufficiently small provided $h\leq h_0,$ where $h_0$ is an absolute constant relying on $\Omega$, $\bm{b}$, $c$, $C_{\text{mesh}}$, $\nu$.  Now we are in a position to present the first main result in this paper. 
\begin{theorem}\label{apriori} For sufficiently small $h$, it holds that
\begin{subequations}\label{prio_result}
    \begin{align}
          \|\bu-\bu_h\|&\lesssim \|\bm{\rho}_h\|+h\vertiii{\bm{\zeta}_h},\\
        \vertiii{\s-\sh}&\lesssim \vertiii{\bm{\zeta}_h}+h\|\bm{\rho}_h\|,\\
        \|\s-\sh\|&\lesssim \|\bm{\zeta}_h\|+h\|\nabla\cdot\bm{\zeta}_h\|+h\|\bm{\rho}_h\|.\label{apriori3}
    \end{align}
\end{subequations}
\end{theorem}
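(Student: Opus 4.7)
The plan is to establish the three bounds in reverse order: once the third bound $\|\s-\sh\|\lesssim\|\bm{\zeta}_h\|+h\|\divg\bm{\zeta}_h\|+h\|\bm{\rho}_h\|$ is proved, the second bound follows from controlling $\|\divg\bxi_h\|$ via Lemma \ref{divneg}, and the first bound then follows from Lemma \ref{superu}.

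First, I would simplify the two lemmas under the smallness assumption on $h$. Bounding $\vertiii{\s-\sh}\le\vertiii{\bm{\zeta}_h}+\vertiii{\bxi_h}$ and $\|\bu-\uh\|\le\|\bm{\rho}_h\|+\|\eh\|$ in Lemma \ref{superu} and absorbing the term $h\|\eh\|$ for small $h$ yields
$$\|\eh\|\lesssim h\bigl(\vertiii{\bm{\zeta}_h}+\vertiii{\bxi_h}+\|\bm{\rho}_h\|\bigr).$$
Substituting this into \eqref{divxil2} and absorbing $h\|\divg\bxi_h\|$ gives
$$\|\divg\bxi_h\|\lesssim \|\bm{\zeta}_h\|+\|\bxi_h\|+h\|\divg\bm{\zeta}_h\|+h\|\bm{\rho}_h\|,$$
and similarly \eqref{divxin1} simplifies to $\|\divg\bxi_h\|_{-1}\lesssim h(\|\bm{\zeta}_h\|+\|\divg\bm{\zeta}_h\|+\|\bxi_h\|+\|\bm{\rho}_h\|)$.

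The core step is to bound $\|\bxi_h\|_\ma$. I would test \eqref{error1} with $\bt_h=\bxi_h\in\bm{\Sigma}_h$, split $\s-\sh=\bm{\zeta}_h+\bxi_h$ and $\bu-\uh=\bm{\rho}_h+\eh$, and use the $L^2$-orthogonality $(\divg\bxi_h,\bm{\rho}_h)=0$ (which holds because $\divg\bxi_h\in\Vh$) to obtain
$$\|\bxi_h\|_\ma^2=-(\ma\bm{\zeta}_h,\bxi_h)-(\divg\bxi_h,\eh).$$
The identity $\ma^2=\ma$ gives $(\ma\bm{\zeta}_h,\bxi_h)=(\ma\bm{\zeta}_h,\ma\bxi_h)$, so Cauchy--Schwarz and Young bound the first term by $\tfrac12\|\bm{\zeta}_h\|^2+\tfrac12\|\bxi_h\|_\ma^2$. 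For the second term I would use $|(\divg\bxi_h,\eh)|\le\|\divg\bxi_h\|\|\eh\|$ and plug in the two simplified bounds from the previous paragraph.

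The main obstacle is choosing the Young's inequality weights so that the sharp $h$-scaling of the third bound is preserved. In the product $\|\divg\bxi_h\|\,\|\eh\|$, each factor is a sum of an $h^0$-part (in $\|\bm{\zeta}_h\|,\|\bxi_h\|$) and an $h^1$-part (in $\|\divg\bm{\zeta}_h\|,\|\bm{\rho}_h\|$); the trick is to always pair an $h^0$-factor with an $h^1$-factor by AM--GM, so that cross terms such as $\|\bm{\zeta}_h\|\cdot h\|\divg\bm{\zeta}_h\|$ become $\tfrac12\|\bm{\zeta}_h\|^2+\tfrac12 h^2\|\divg\bm{\zeta}_h\|^2$, while every $\|\bxi_h\|$-factor is handled by a weighted Young $\epsilon\|\bxi_h\|^2+(\text{rest})/\epsilon$ with $\epsilon$ small enough to be absorbed later. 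After this bookkeeping I expect
$$\|\bxi_h\|_\ma^2\lesssim \|\bm{\zeta}_h\|^2+h^2\bigl(\|\divg\bm{\zeta}_h\|^2+\|\bm{\rho}_h\|^2\bigr)+(\epsilon+h)\|\bxi_h\|^2.$$
Combining this with \eqref{rb} and the $H^{-1}$-bound on $\divg\bxi_h$ from the first paragraph yields $\|\bxi_h\|^2\lesssim\|\bxi_h\|_\ma^2+h^2(\|\bm{\zeta}_h\|^2+\|\divg\bm{\zeta}_h\|^2+\|\bxi_h\|^2+\|\bm{\rho}_h\|^2)$; absorbing the $\|\bxi_h\|^2$ terms into the left-hand side for sufficiently small $\epsilon$ and $h$ produces the third bound. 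The $\|\divg\bxi_h\|$-bound from the first paragraph then yields the second bound, and feeding it back into Lemma \ref{superu} yields the first bound.
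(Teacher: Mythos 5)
Your proposal is correct and follows essentially the same route as the paper: both rest on Lemma \ref{superu}, Lemma \ref{divneg}, testing \eqref{error1} with $\bm{\tau}_h=\bm{\xi}_h$ (where $(\divg\bm{\xi}_h,\bm{\rho}_h)=0$ is exactly the paper's replacement of $\bm{u}-\bm{u}_h$ by $P_h\bm{u}-\bm{u}_h$), the inequality \eqref{rb}, and weighted Young's inequalities with absorption for small $\varepsilon$ and $h$. The only difference is bookkeeping order—you eliminate $\|\divg\bm{\xi}_h\|$ from the intermediate bounds before the Young step, while the paper carries $\|\bm{e}_h\|$ along and closes the loop at the end—which does not change the substance of the argument.
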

\begin{proof}
Using Lemma \ref{superu} and the triangle inequality \begin{align*}
\|\bu-\uh\|&\leq\|\bm{\rho}_h\|+\|\eh\|,\\
\vertiii{\bm{\sigma}-\bm{\sigma}_h}&\leq\vertiii{\bm{\zeta}_h}+\vertiii{\bm{\xi}_h},
\end{align*}
we obtain
\begin{equation}\label{eh_error_general} 
\|\eh\|\lesssim h\big( \vertiii{\bm{\zeta}_h}+\vertiii{\bm{\xi}_h}+\|\bm{\rho}_h\|\big)+h\|\eh\|.
\end{equation}
Plugging  \eqref{divxil2} into \eqref{eh_error_general} and kicking $h\|\bm{e}_h\|$ (with sufficiently small $h$) back to the left hand side  yields
\begin{equation}\label{eh_error_general_2}
\|\eh\|\lesssim h\big(\vertiii{\bm{\zeta}_h}+\|\bxi_h\|+\|\bm{\rho}_h\|\big).
\end{equation}
On the other hand, with help of \eqref{error1} with  $\bm{\tau}_h=\bm{\xi}_h$, we obtain
\begin{equation}\label{eh_error}
\begin{aligned}
(\ma\bxi_h,\bxi_h)&=(\ma(\ps-\s),\bxi_h)-(\nabla\cdot\bxi_h,\bu-\uh)\\
    &=(\ma(\ps-\s),\bxi_h)-(\nabla\cdot\bxi_h,P_h\bu-\uh).
\end{aligned}
\end{equation}
It follows from \eqref{rb}, \eqref{eh_error},  and a Young's inequality with $\varepsilon>0$ that
\begin{equation*}
\begin{aligned}
\|\bxi_h\|^2&\lesssim(\ma\bxi_h,\bxi_h)+\|\nabla\cdot\bxi_h\|_{-1}^2\\
&\lesssim\|\bm{\zeta}_h\|\|\bxi_h\|+\|\nabla\cdot\bxi_h\|\|\eh\|+\|\nabla\cdot\bxi_h\|_{-1}^2\\
&\leq\frac{\varepsilon^2}{2}\|\bm{\xi}_h\|^2+\frac{\varepsilon^{-2}}{2}\|\bm{\zeta}_h\|^2+\frac{\varepsilon^2}{2}\|\nabla\cdot\bxi_h\|^2+\frac{\varepsilon^{-2}}{2}\|\eh\|^2+\|\nabla\cdot\bm{\xi}_h\|_{-1}^2,
\end{aligned}
\end{equation*}
or equivalently
\begin{equation}\label{xiest1}
\|\bxi_h\|\leq C^*\big({\varepsilon}\|\bm{\xi}_h\|+{\varepsilon^{-1}}\|\bm{\zeta}_h\|+{\varepsilon}\|\nabla\cdot\bm{\xi}_h\|+{\varepsilon^{-1}}\|\eh\|+\|\nabla\cdot\bm{\xi}_h\|_{-1}\big),
\end{equation}
where $C^*$ is independent of $\varepsilon$. Using  \eqref{xiest1}, Lemma \ref{divneg}, and the triangle inequality, we  deduce that
\begin{equation*}
\begin{aligned}
\|\bxi_h\|&\leq C^*\big\{\varepsilon\big(\|\bxi_h\|+\|\bm{\zeta}_h\|+\|\eh\|+h\|\bm{\rho}_h\|\big)\\
&+{\varepsilon^{-1}}\big(\|\bm{\zeta}_h\|+\|\eh\|\big)+h\vertiii{\bm{\zeta}_h}+h\|\bxi_h\|+h\|\bm{\rho}_h\|+\|\eh\|\big\}.
\end{aligned}
\end{equation*}
In the above estimate, it suffices to choose sufficiently small $\varepsilon$ and $h$ to obtain
\begin{equation}\label{xierr}
    \|\bxi_h\|\leq C \big(\|\bm{\zeta}_h\|+h\|\nabla\cdot\bm{\zeta}_h\|+\|\eh\|+h\|\bm{\rho}_h\|\big).
\end{equation}
Plugging \eqref{xierr} into \eqref{eh_error_general_2} with sufficiently small $h$ then leads to
\begin{equation}\label{eherr}
\|\eh\|\leq C h\big(\vertiii{\bm{\zeta}_h}+\|\bm{\rho}_h\|\big).
\end{equation}
Therefore we close the loop.
As a consequence of  \eqref{xierr},  \eqref{eherr}, \eqref{divxil2}, we obtain
\begin{equation}\label{xiherr}
\begin{aligned}
 \|\bxi_h\|&\leq C \big(\|\bm{\zeta}_h\|+h\|\nabla\cdot\bm{\zeta}_h\|+h\|\bm{\rho}_h\|\big),\\
 \|\nabla\cdot\bxi_h\|&\leq C \big(\|\bm{\zeta}_h\|+h\|\nabla\cdot\bm{\zeta}_h\|+h\|\bm{\rho}_h\|\big).
\end{aligned}
\end{equation}
Theorem \ref{apriori} eventually follows from \eqref{eherr}, \eqref{xiherr} and a triangle inequality.
\qed\end{proof}

For sufficiently smooth $(\bm{\sigma},\bm{u})$, Theorem \ref{apriori} with \eqref{approx} yields
\begin{subequations}\label{convrate}
    \begin{align}
          \|\bu-\bu_h\|&\lesssim h^{r+1}\big( |\bm{u}|_{r+1}+h|\bm{\sigma}|_{r+1}+h|\nabla\cdot\bm{\sigma}|_{r+1}\big),\\
        \vertiii{\s-\sh}&\lesssim  h^{r+1}\big(|\bm{\sigma}|_{r+1}+|\nabla\cdot\bm{\sigma}|_{r+1}+h|\bm{u}|_{r+1}\big).\label{convrateb}
    \end{align}
\end{subequations}
When $\bm{\Sigma}_h$ is based on the BDM element \eqref{BDM}, the following improved error estimate follows from  \eqref{apriori3} and \eqref{approxBDM}.
\begin{equation}\label{convrateBDM}
    \|\bm{\sigma}-\bm{\sigma}_h\|\lesssim h^{r+2}\big(|\bm{\sigma}|_{r+2}+|\bm{u}|_{r+1}\big).
\end{equation}

The well-posedness \eqref{mixed} follows from the same analysis in the proof of Theorem \ref{apriori}.
\begin{corollary}
For sufficiently small $h$, the mixed method \eqref{mixed} has a unique solution.
\end{corollary}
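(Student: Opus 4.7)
The plan is to exploit the fact that \eqref{mixed} is a square linear system on the finite-dimensional space $\bm{\Sigma}_h\times\bm{V}_h$, so existence of a solution reduces to uniqueness of the solution. To establish uniqueness, I would take two hypothetical solutions $(\bm{\sigma}_h^{(1)},\bm{u}_h^{(1)})$ and $(\bm{\sigma}_h^{(2)},\bm{u}_h^{(2)})$ of \eqref{mixed} and examine their difference $(\sh,\uh):=(\bm{\sigma}_h^{(1)}-\bm{\sigma}_h^{(2)},\bm{u}_h^{(1)}-\bm{u}_h^{(2)})$, which solves the homogeneous version of \eqref{mixed} (i.e., $\bm{f}=\bm{0}$, $\bm{g}=\bm{0}$).

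The key observation is that this homogeneous discrete system is identical to the error equation \eqref{error} when the continuous reference pair is chosen as $(\bm{\sigma},\bm{u})=(\bm{0},\bm{0})$, which trivially satisfies \eqref{cts} with zero data and hence does not require an a priori well-posedness statement for the continuous problem. Under this choice, both interpolation errors vanish, $\bm{\zeta}_h=\bm{0}$ and $\bm{\rho}_h=\bm{0}$, while $\bxi_h=-\sh$ and $\eh=-\uh$.

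I would then feed $\bm{\zeta}_h=\bm{0}$ and $\bm{\rho}_h=\bm{0}$ into the chain of estimates already produced in the proof of Theorem~\ref{apriori}. The bound \eqref{eherr} gives $\|\eh\|=0$, and then \eqref{xiherr} combined with \eqref{divxil2} gives $\|\bxi_h\|=0$ and $\|\divg\bxi_h\|=0$. Hence $\sh=\bm{0}$ and $\uh=\bm{0}$, so the two hypothetical solutions coincide, proving uniqueness.

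The only subtle point, and the main thing to verify carefully, is that the proof of Theorem~\ref{apriori} (through Lemmas~\ref{superu} and \ref{divneg}) depends only on the abstract structure of the error equation \eqref{error}, the duality regularity \eqref{duality_assumption}, the commutativity \eqref{com}, and the interpolation bounds \eqref{approx}, and not on any particular nonzero data. A careful inspection of each step of that proof confirms this, so the same smallness threshold $h\leq h_0$ used in Theorem~\ref{apriori} is all that is needed here, matching the hypothesis of the corollary.
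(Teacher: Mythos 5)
Your proposal is correct and follows essentially the same route as the paper: the paper also reduces existence to uniqueness by linearity, forms the homogeneous error equation for the difference of two solutions, and reruns the duality argument of Lemma~\ref{superu} together with the bounds of Lemma~\ref{divneg} and the proof of \eqref{xierr} to conclude $\|\bm{e}_u\|\lesssim h\|\bm{e}_u\|$. Your packaging --- viewing the homogeneous system as the error equation \eqref{error} with exact solution $(\bm{0},\bm{0})$ so that $\bm{\zeta}_h=\bm{\rho}_h=\bm{0}$ and the final estimates \eqref{eherr} and \eqref{xiherr} apply verbatim --- is just a cleaner way of saying what the paper does by repeating the argument with new notation.
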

\begin{proof}
To establish the existence and uniqueness of the solution to \eqref{mixed}, it suffices to show the uniqueness because of linearity. Suppose $(\bm{\sigma}_h,\bm{u}_h)$ and $(\widehat{\bm{\sigma}}_h,\widehat{\bm{u}}_h)$ are both solutions to \eqref{mixed}, then we have the error equation
\begin{equation}\label{errorex}
    \begin{aligned}
    (\ma\bm{e}_\sigma,\bt_h)+(\nabla\cdot\bm{\tau}_h,\bm{e}_u)&=0,\quad\bm{\tau}_h\in\bm{\Sigma}_h,\\
    -(\nabla\cdot \bm{e}_\sigma,\bm{v}_h)+(\bm{b}\cdot\mathcal{A}\bm{e}_\sigma,\bm{v}_h)+(c\bm{e}_u,\bm{v}_h)&=0,\quad\bm{v}_h\in\bm{V}_h,
    \end{aligned}
\end{equation}
where $\bm{e}_\sigma:=\bm{\sigma}_h-\widehat{\bm{\sigma}}_h$, $\bm{e}_u:=\bm{u}_h-\widehat{\bm{u}}_h.$ Consider the dual problem
\begin{equation}\label{dualex}
    \begin{aligned}
    (\ma\widetilde{\bm{\sigma}},\bt)+(\nabla\cdot\bm{\tau},\widetilde{\bm{u}})-(\bm{b}\cdot\mathcal{A}\bm{\tau},\widetilde{\bm{u}})&=0,\quad\bm{\tau}\in\bm{\Sigma},\\
    -(\nabla\cdot\widetilde{\bm{\sigma}},\bm{v})+(c\widetilde{\bm{u}},\bm{v})&=(\bm{e}_u,v),\quad\bm{v}\in\bm{V}.
    \end{aligned}
\end{equation}
It then follows from \eqref{errorex}, \eqref{dualex} and the same analysis in Lemma \ref{superu} that
\begin{equation}\label{eusigma}
\|\bm{e}_u\|\lesssim h\vertiii{\bm{e}_\sigma},
\end{equation}
provided $h$ is sufficiently small.
The same argument for proving Lemma \ref{divneg} yields
\begin{equation}\label{divex}
\begin{aligned}
\|\nabla\cdot\bm{e}_\sigma\|&\lesssim\|\bm{e}_\sigma\|+\|\bm{e}_u\|,\\
\|\nabla\cdot\bm{e}_\sigma\|_{-1}&\lesssim h\vertiii{\bm{e}_\sigma}+\|\bm{e}_u\|. 
\end{aligned}
\end{equation}
Following the proof of \eqref{xierr} and using \eqref{eusigma}, \eqref{divex}, we obtain
\begin{equation}\label{esigmau}
\|\bm{e}_\sigma\|\lesssim\|\bm{e}_u\|,
\end{equation}
when $h$ is small enough. Finally combining \eqref{eusigma}, \eqref{esigmau}, \eqref{divex} yields 
$$\|\bm{e}_u\|\lesssim h\|\bm{e}_u\|.$$
Hence $\bm{e}_u=\bm{0}$ and $\bm{e}_\sigma=\bm{0}$ provided $h$ is sufficiently small.
\qed\end{proof}

\begin{proof}[Proof of Lemma \ref{divneg}]
Let $\bv_h=\nabla\cdot\bm{\xi}_h/\|\nabla\cdot\bm{\xi}_h\|$. Using \eqref{com} and \eqref{error2}, we arrive at
\begin{equation}\label{divxil2est}
    \begin{aligned}
    &\|\nabla\cdot \bxi_h\|=(\nabla\cdot\bxi_h,\bm{v}_h)=(P_h\nabla\cdot\bm{\sigma}-\nabla\cdot\sh,\bm{v}_h)=(\nabla\cdot(\s-\sh),\bm{v}_h)\\
    &\quad=(\bb\cdot\ma(\s-\sh),\bm{v}_h)+(c(P_h\bu-\uh),\bm{v}_h)+((c-c_h)(\bu-P_h\bm{u}),\bm{v}_h).
    \end{aligned}
\end{equation}
where $c_h$ is the piecewise average of $c$ w.r.t.~$\Th.$ The estimate
\eqref{divxil2} then follows from \eqref{divxil2est}, the Cauchy--Schwarz and triangle inequalities.

To prove \eqref{divxin1},
let $\bm{v}\in [H_0^1(\Omega)]^d$ with $\|\bm{v}\|_1=1$. It follows from \eqref{com}, \eqref{defP}, and \eqref{error2} that
\begin{equation}\label{divtotal}
    \begin{aligned}
    &(\nabla\cdot\bm{\xi}_h,\bm{v})=(P_h\nabla\cdot\bm{\sigma}-\nabla\cdot\bm{\sigma},\bm{v})+(\nabla\cdot\bm{\sigma}-\nabla\cdot\bm{\sigma}_h,\bm{v})\\
    &\quad=(P_h\nabla\cdot\bm{\sigma}-\nabla\cdot\bm{\sigma},\bm{v}-P_h\bm{v})+(\nabla\cdot\bm{\sigma}-\nabla\cdot\bm{\sigma}_h,\bm{v}-P_h\bm{v})\\
    &\qquad+(\bm{b}\cdot\mathcal{A}(\bm{\sigma}-\bm{\sigma}_h),\bm{v})+(\bm{b}\cdot\mathcal{A}(\bm{\sigma}-\bm{\sigma}_h),P_h\bm{v}-\bm{v})\\
    &\qquad+(c(\bm{u}-P_h\bm{u}),P_h\bm{v})+(c(P_h\bm{u}-\bm{u}_h),P_h\bm{v}).
    \end{aligned}
\end{equation}
Using \eqref{adjA} and \eqref{error1}, we can rewrite  $(\bm{b}\cdot\mathcal{A}(\bm{\sigma}-\bm{\sigma}_h),\bm{v})$ as
\begin{equation}\label{divterm1}
    \begin{aligned}
    &\quad(\bm{b}\cdot\mathcal{A}(\bm{\sigma}-\bm{\sigma}_h),\bm{v})\\
    &=(\mathcal{A}(\bm{\sigma}-\bm{\sigma}_h),\bm{v}\bm{b}^\top)=(\mathcal{A}(\bm{\sigma}-\bm{\sigma}_h),\mathcal{A}(\bm{v}\bm{b}^\top))\\
    &=(\mathcal{A}(\bm{\sigma}-\bm{\sigma}_h),\Pi_h\mathcal{A}(\bm{v}\bm{b}^\top))+(\mathcal{A}(\bm{\sigma}-\bm{\sigma}_h),(\bm{I}-\Pi_h)\mathcal{A}(\bm{v}\bm{b}^\top))\\
    &=-(\nabla\cdot\Pi_h\mathcal{A}(\bm{v}\bm{b}^\top),\bm{u}-\bm{u}_h)+(\mathcal{A}(\bm{\sigma}-\bm{\sigma}_h),(\bm{I}-\Pi_h)\mathcal{A}(\bm{v}\bm{b}^\top))\\
    &=-(\nabla\cdot\Pi_h\mathcal{A}(\bm{v}\bm{b}^\top),P_h\bm{u}-\bm{u}_h)+(\mathcal{A}(\bm{\sigma}-\bm{\sigma}_h),(\bm{I}-\Pi_h)\mathcal{A}(\bm{v}\bm{b}^\top)).
    \end{aligned}
\end{equation}
On the other hand,
\begin{equation}\label{divterm2}
(c(\bm{u}-P_h\bm{u}),P_h\bm{v})=((c-c_h)(\bm{u}-P_h\bm{u}),P_h\bm{v}).
\end{equation}
Combining \eqref{divtotal} with \eqref{divterm1} and \eqref{divterm2} and using \eqref{approx}, we obtain
\begin{equation}\label{div1}
    \begin{aligned}
    &\|\nabla\cdot\bm{\xi}_h\|_{-1}=\sup_{\|\bm{v}\|_1=1}(\nabla\cdot\bm{\xi}_h,\bm{v})\\
    &\lesssim h\big(\|P_h\nabla\cdot\bm{\sigma}-\nabla\cdot\bm{\sigma}\|+\vertiii{\bm{\sigma}-\bm{\sigma}_h}+\|\bm{u}-P_h\bm{u}\|\big)\\
    &\quad+(1+\|\nabla\cdot\Pi_h\mathcal{A}(\bm{v}\bm{b}^\top)\|)\|\eh\|.
    \end{aligned}
\end{equation}
It follows from \eqref{com} and elementary calculation that 
$$\|\nabla\cdot\Pi_h\mathcal{A}(\bm{v}\bm{b}^\top)\|=\|P_h\nabla\cdot\mathcal{A}(\bm{v}\bm{b}^\top)\|\leq\|\nabla\cdot\mathcal{A}(\bm{v}\bm{b}^\top)\|\leq C\|\bm{v}\|_1\leq C.$$ 
Therefore using the previous estimate and \eqref{div1}, we obtain
\begin{equation*}
    \|\nabla\cdot\bm{\xi}\|_{-1}\lesssim h\|\nabla\cdot\Pi_h\bm{\sigma}-\nabla\cdot\bm{\sigma}\|+h\vertiii{\bm{\sigma}-\bm{\sigma}_h}+h\|\bm{\rho}_h\|+\|\eh\|.
\end{equation*}
The proof is complete.
\qed\end{proof}
\section{Superconvergence on pseudostress}\label{secsuper}
Using Lemma \ref{superu} and \eqref{convrate}, we obtain the supercloseness estimate on $\eh=P_h\bm{u}-\bm{u}_h$.
\begin{equation}\label{ratesuperu}
           \|\eh\|\lesssim h^{r+2}\big(|\bm{\sigma}|_{r+1}+|\nabla\cdot\bm{\sigma}|_{r+1}+|\bm{u}|_{r+1}\big).
\end{equation} 
In this section, we shall prove an improved estimate for $\bm{\xi}_h=\Pi_h\bm{\sigma}-\bm{\sigma}_h$. To this end,
$\bm{\Sigma}_h$ is endowed with the $\Lambda$-inner product
\begin{equation*}
    \Lambda(\bm{\tau}_1,\bm{\tau}_2):=(\ma\bm{\tau}_1,\bm{\tau}_2)+(\nabla\cdot\bm{\tau}_1,\nabla\cdot\bm{\tau}_2),\quad\bm{\tau}_1,\bm{\tau}_2\in\bm{\Sigma}_h.
\end{equation*}
Let $\bm{Z}_h:=\ker\big(\nabla\cdot|_{\bm{\Sigma}_h}\big)$ and $\bm{Z}^\perp_h$ be the orthogonal complement of $\bm{Z}_h$ w.r.t.~$\Lambda(\cdot,\cdot)$ in $\bm{\Sigma}_h$. We shall decompose $\bxih\in\bm{\Sigma}_h$ by the discrete Helmholtz decomposition
\begin{equation}\label{decomp}
     \bm{\Sigma}_h=\bm{Z}_h\oplus_\Lambda\bm{Z}_h^\perp,
\end{equation}
which can be analyzed using tools in finite element exterior calculus (FEEC), see, e.g., \cite{ArnoldFalkWinther2006,ArnoldFalkWinther2010}. In the theory of FEEC, an essential ingredient is the  bounded projections that commute with exterior derivatives. In particular, there exist {projections}  $\widetilde{\pi}_h: [L^2(\Omega)]^d\rightarrow\widetilde{\bm{\Sigma}}_h$ and ${Q}_h: L^2(\Omega)\rightarrow{V}_h$ (see, e.g., \cite{ArnoldFalkWinther2006,ChristiansenWinther2008}) such that 
\begin{equation}\label{pitilde}
\begin{aligned}
    &\widetilde{\pi}_h|_{\widetilde{\bm{\Sigma}}_h}=\text{Id},\quad{Q}_h|_{{V}_h}=\text{Id},\\
    &\nabla\cdot\widetilde{\pi}_h\bm{\tau}={Q}_h\nabla\cdot\bm{\tau}\text{ for }\bm{\tau}\in H(\text{div},\Omega),\\
    &\|\widetilde{\pi}_h\bm{\tau}\|\lesssim\|\bm{\tau}\|\text{ for }\bm{\tau}\in [L^2(\Omega)]^d,
\end{aligned}
\end{equation}
where $\text{Id}$ is the identity mapping. Starting from $\widetilde{\pi}_h, Q_h,$ one can easily obtain commuting bounded projections onto $\bm{\Sigma}_h$ and $\bm{V}_h$. Let
\begin{equation*}
    \pi_h\bt:=\widetilde{\pi}_h\bt-\frac{1}{d|\Omega|}\left(\int_{\Omega}\text{Tr}(\widetilde{\pi}_h\bm{\tau})dx\right)\bm{I},
\end{equation*}
where $\widetilde{\pi}_h$ is applied to each row of $\bm{\tau}$. Similarly, $Q_h$ can applied to each component of a vector-valued function in $[L^2(\Omega)]^d$.
Using the property of $\widetilde{\pi}_h$ in \eqref{pitilde}, we have 
\begin{subequations}
\begin{align}
    \pi_h|_{\bm{\Sigma}_h}&=\text{Id},\quad{Q}_h|_{\bm{V}_h}=\text{Id},\label{preserv}\\
    \nabla\cdot{\pi}_h\bm{\tau}&={Q}_h\nabla\cdot\bm{\tau}\quad\text{ for }\bm{\tau}\in\bm{\Sigma},\label{compih}\\
    \|{\pi}_h\bm{\tau}\|&\lesssim\|\bm{\tau}\|\quad\text{ for }\bm{\tau}\in[L^2(\Omega)]^d,\label{bdpih}
\end{align}
\end{subequations}
where $\text{Id}$ is the identity mapping.
Then with the help of ${\pi}_h$, $Q_h$, we obtain the following lemma for estimating the decomposition component living in $\bm{Z}_h^\perp.$ \begin{lemma}\label{disPoin}
It holds that
\begin{equation*}
\|\bm{\tau}_h\|_\CA\lesssim\|\nabla\cdot\bm{\tau}_h\|_{-1},\quad\bm{\tau}_h\in\bm{Z}_h^\perp.  
\end{equation*}
\end{lemma}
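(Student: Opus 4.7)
The plan is to turn the $\Lambda$-orthogonality defining $\bm{Z}_h^\perp$ into the claimed bound by producing a continuous representative $\bm{\tau}^{*}\in\bm{\Sigma}$ of $\divg\bm{\tau}_h$ and then projecting it into $\bm{\Sigma}_h$ via the commuting, $L^2$-bounded projection $\pi_h$. The $\Lambda$-orthogonality will pick off $\|\bm{\tau}_h\|_\CA^{2}$ on the left while the stability of the right inverse supplies $\|\divg\bm{\tau}_h\|_{-1}$ on the right.

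First I would construct the continuous right inverse of the divergence. Given $\bm{\tau}_h\in\bm{Z}_h^\perp$, solve the componentwise vector Poisson problem $-\Delta\bm{u}=\divg\bm{\tau}_h$ in $\Omega$ with $\bm{u}|_{\partial\Omega}=\bm{0}$. Since $\divg\bm{\tau}_h\in\bm{V}_h\subset[L^{2}(\Omega)]^{d}$, Lax--Milgram with the Poincar\'e inequality yields $\bm{u}\in[H_0^{1}(\Omega)]^{d}$ with $\|\nabla\bm{u}\|\lesssim\|\divg\bm{\tau}_h\|_{-1}$. Set $\bm{\tau}^{*}:=-\nabla\bm{u}$. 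Each row of $\bm{\tau}^{*}$ is in $H(\divg,\Omega)$ with $\divg\bm{\tau}^{*}=\divg\bm{\tau}_h$, and the vanishing trace of $\bm{u}$ gives $\int_\Omega\tr\bm{\tau}^{*}\,dx=-\int_\Omega\divg\bm{u}\,dx=0$, so $\bm{\tau}^{*}\in\bm{\Sigma}$ with $\|\bm{\tau}^{*}\|\lesssim\|\divg\bm{\tau}_h\|_{-1}$.

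Next I would push this into $\bm{\Sigma}_h$ by $\pi_h$. The commuting property \eqref{compih} together with $Q_h|_{\bm{V}_h}=\text{id}$ from \eqref{preserv} gives
\begin{equation*}
\divg\pi_h\bm{\tau}^{*}=Q_h\divg\bm{\tau}^{*}=Q_h\divg\bm{\tau}_h=\divg\bm{\tau}_h,
\end{equation*}
so $\bm{\tau}_h-\pi_h\bm{\tau}^{*}\in\bm{Z}_h$. Testing $\Lambda$-orthogonality of $\bm{\tau}_h$ against this element and cancelling the identical divergence contributions yields $\|\bm{\tau}_h\|_{\CA}^{2}=(\CA\bm{\tau}_h,\pi_h\bm{\tau}^{*})$. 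Using \eqref{adjA} and Cauchy--Schwarz, the right-hand side is bounded by $\|\bm{\tau}_h\|_{\CA}\|\pi_h\bm{\tau}^{*}\|_{\CA}\le\|\bm{\tau}_h\|_{\CA}\|\pi_h\bm{\tau}^{*}\|$, and \eqref{bdpih} chained with Step 1 gives $\|\pi_h\bm{\tau}^{*}\|\lesssim\|\bm{\tau}^{*}\|\lesssim\|\divg\bm{\tau}_h\|_{-1}$. Dividing by $\|\bm{\tau}_h\|_{\CA}$ finishes the argument.

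The one delicate step is engineering the continuous right inverse of $\divg$ to land in $\bm{\Sigma}$ with the correct $H^{-1}\!\to\!L^{2}$ estimate, i.e.~satisfying the $H(\divg)$ regularity and the traceless-mean constraint simultaneously; the homogeneous Poisson construction supplies all three at once. After that, commutativity and boundedness of $\pi_h$ make the discrete side essentially automatic, and in particular no discrete inf-sup for the pairing $(\divg\bm{\tau}_h,\bm{v}_h)$ is required.
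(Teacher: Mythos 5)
Your proof is correct and follows essentially the same route as the paper: solve a homogeneous vector Poisson problem to obtain a continuous lifting $\bm{\tau}^{*}=\pm\nabla\bm{u}$ of $\divg\bm{\tau}_h$ with $\|\bm{\tau}^{*}\|\lesssim\|\divg\bm{\tau}_h\|_{-1}$, push it into $\bm{\Sigma}_h$ with the commuting bounded projection $\pi_h$ so that $\bm{\tau}_h-\pi_h\bm{\tau}^{*}\in\bm{Z}_h$, and use the $\Lambda$-orthogonality together with \eqref{adjA} and \eqref{bdpih}. The only (harmless) addition is your explicit check that $\int_\Omega\tr\bm{\tau}^{*}\,dx=0$, which the paper leaves implicit.
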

\begin{proof}
Let $\bm{\phi}\in[H_0^1(\Omega)]^d$ be the weak solution to $$\Delta\bm{\phi}=\nabla\cdot\bm{\tau}_h.$$
Then $\bm{\tau}:=\nabla\bm{\phi}$ satisfies $\nabla\cdot\bm{\tau}=\nabla\cdot\bm{\tau}_h.$ Using it and \eqref{compih}, \eqref{preserv}, we obtain
\begin{align*}
&\nabla\cdot(\bm{\tau}_h-\pi_h\bm{\tau})=\nabla\cdot\bm{\tau}_h-{Q}_h\nabla\cdot\bm{\tau}\\
&\quad=\nabla\cdot\bm{\tau}_h-{Q}_h\nabla\cdot\bm{\tau}_h=\bm{0},
\end{align*}
i.e., $\bm{\tau}_h-\pi_h\bm{\tau}\in\bm{Z}_h$. Recall that $\|\cdot\|_{-1}$ denotes the $H^{-1}(\Omega)$-norm. The \emph{natural} $H^1$-elliptic regularity of $\Delta\bm{\phi}=\nabla\cdot\bm{\tau}_h$ yields
\begin{equation}\label{ellipticreg}
    \|\bm{\tau}\|\leq\|\bm{\phi}\|_1\lesssim\|\nabla\cdot\bm{\tau}_h\|_{-1}.
\end{equation} Hence using $\bm{\tau}_h-\pi_h\bm{\tau}\in\bm{Z}_h$, $\bm{\tau}_h\in\bm{Z}_h^\perp$, \eqref{ellipticreg}, and \eqref{bdpih}, we have
\begin{equation*}
    \begin{aligned}
    \|\bm{\tau}_h\|_\CA^2&=\Lambda(\bm{\tau}_h-\pi_h\bm{\tau},\bm{\tau}_h)+(\CA\pi_h\bm{\tau},\bm{\tau}_h)\\
    &=(\CA\pi_h\bm{\tau},\bm{\tau}_h)\leq\|\pi_h\bm{\tau}\|_\CA\|\bm{\tau}_h\|_\CA\\
    &\lesssim\|\bm{\tau}\|\|\bm{\tau}_h\|_\CA\lesssim\|\nabla\cdot\bm{\tau}_h\|_{-1}\|\bm{\tau}_h\|_\CA.
    \end{aligned}
\end{equation*}
The proof is complete.\qed\end{proof}
\begin{remark}
It follows from the discrete abstract Poincar\'e inequality (see \cite{ArnoldFalkWinther2010}, Theorem 3.6) that
$$\|\bm{\tau}_h\|_\CA\lesssim\|\nabla\cdot\bm{\tau}_h\|\text{ for }\bm{\tau}_h\in\bm{Z}_h^\perp.$$
Hence Lemma \ref{disPoin} is an improved discrete Poincar\'e inequality. The improvement is achieved by utilizing the special property of the divergence operator.
\end{remark}

We emphasize that the improved estimate of $\bm{\xi}_h$ is dependent on mesh structure, type of finite elements, and quite technical. For simplicity of presentation, first let $\bm{\Sigma}_h\times\bm{V}_h$ be based on the lowest order RT element \eqref{RT} with $r=0$ in $\mathbb{R}^2$ although the estimates can be generalized in several ways, see the end of this section. 
Given a scalar-valued function $v$ and a vector-valued function $\bm{v}=(v_1,v_2)^\top$, let
\begin{align*}
    \nabla^\perp v:=(-\partial_{x_2}{v},\partial_{x_1}{v})^\top,\quad \bm{v}^\perp:=(-{v_2},v_1)^\top.
\end{align*}
The row-wise rotational gradient or curl is defined as
$$\nabla^\perp\bm{v}=\left[\begin{matrix}(\nabla^\perp v_1)^\top\\(\nabla^\perp v_2)^\top\end{matrix}\right].$$
Introducing the following nodal element spaces
\begin{align*}
    {W}_h&=\{w\in {H}^{1}(\Omega): {w}|_K\in\mathcal{P}_1(K)\text{ for }K\in\Th\},\\
    \bm{W}_h&=\{\bm{w}\in [W_h]^2:~\int_{\Omega}\nabla\cdot \bm{w}^\perp dx=0\},
\end{align*}
we obtain a two-dimensional discrete sequence 
\begin{equation}\label{discomplex}
    \begin{aligned}
     \bm{W}_h\xrightarrow{\nabla^\perp}\bm{\Sigma}_{h}\xrightarrow{\nabla\cdot}\bm{V}_h\rightarrow 0.
    \end{aligned}
\end{equation}
The inclusion $\nabla^\perp\bm{W}_h\subset\bm{\Sigma}_h$follows from $\int_\Omega\nabla\cdot \bm{w}_h^\perp dx=-\int_\Omega\text{Tr}\nabla^\perp\bm{w}_hdx=0$ $\forall\bm{w}_h\in\bm{W}_h$. 

The supercloseness estimate of $\bxih$ does not hold on general unstructured grids. For elliptic equations discretized by RT elements, the author Y.~Li et al.~in \cite{Li2018SINUM,BankLi2019} derived several supercloseness estimates on the vector unknown in mixed methods  under certain mildly structured grids. Readers are referred to  \cite{Duran1990,EwingLiuWang1999,Brandts1994,Brandts2000} for similar results on Poisson's equation under rectangular, $h^2$-uniform quadrilateral, and uniform triangular grids. To avoid lengthy descriptions of different mesh structures, we focus on the following piecewise uniform grids. 
\begin{figure}[tbhp]
\centering
\includegraphics[width=12cm,height=5.0cm]{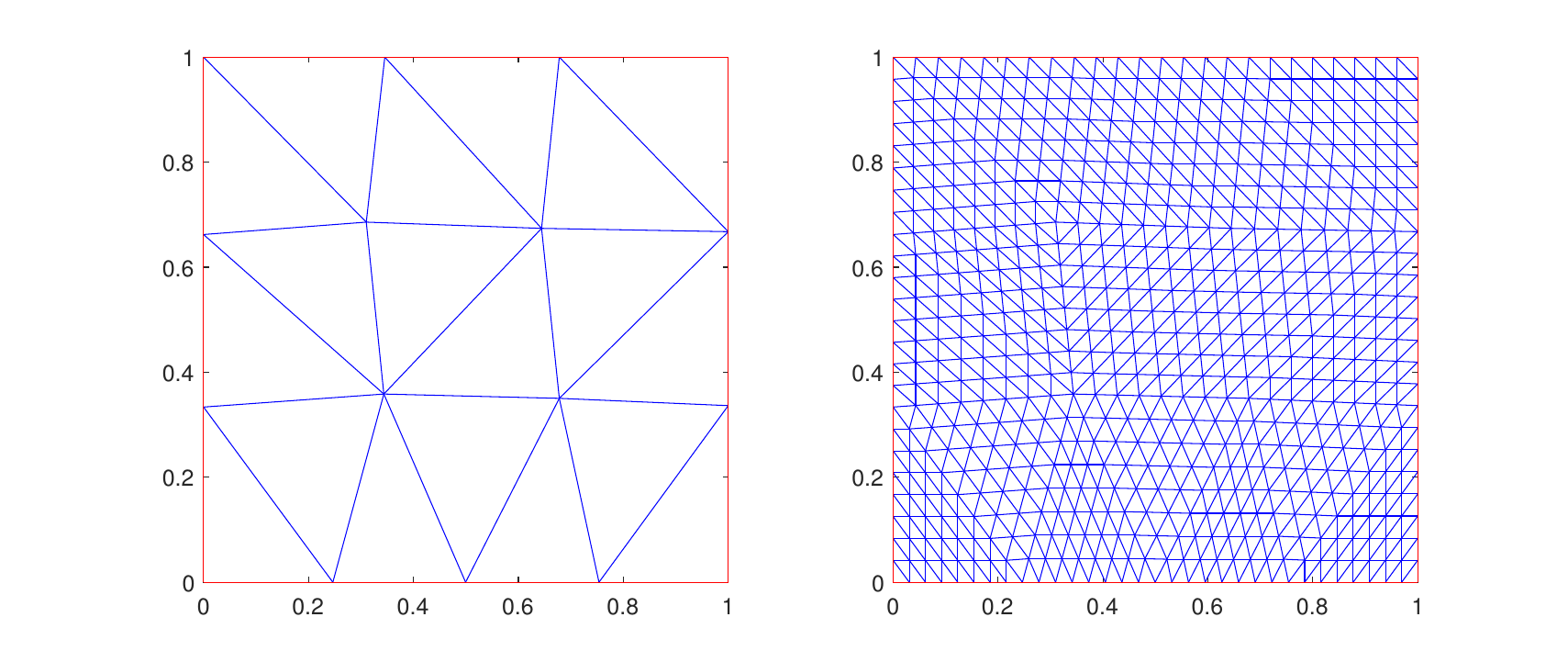}
\caption{Piecewise uniform grid on a square}
\label{pwuniform}
\end{figure}
\begin{definition}
We say $\Th$ is a uniform grid provided each pair of adjacent triangles (two triangles sharing a common edge) form an exact parallelogram. Let $\{\Omega_i\}_{i=1}^N$ be a fixed polygonal partition of  $\Omega.$ We say $\Th$ is a piecewise uniform grid provided $\Th$ is aligned with  $\{\Omega_i\}_{i=1}^N$ and  $\mathcal{T}_{h}|_{\Omega_i}$ is a uniform grid for each $1\leq i\leq N$.
\end{definition}
For instance, let $\Omega=[0,1]^2$ be the unit square that is split into the 19 triangles $\{\Omega_i\}_{i=1}^{19}$ given in Figure \ref{pwuniform} (left). After 3 consecutive uniform quad-refinement, we obtain the triangulation in Figure \ref{pwuniform} (right), which is a piecewise uniform grid (w.r.t.~$\{\Omega_i\}_{i=1}^{19}$).
The piecewise uniform mesh structure is only used in the next technical lemma.
\begin{lemma}\label{varerror}
Let $\bm{D}\in\mathbb{R}^{2\times2}$ be a constant matrix. Let $\widetilde{\Pi}_h$ be the canonical interpolation onto  $\widetilde{\bm{\Sigma}}_h$, the lowest order RT element space. For a piecewise uniform $\Th$ and $\bm{v}\in W^2_\infty(\Omega)$, $w_h\in W_h,$ it holds that
\begin{equation*}
    (\bm{\bm{D}}(\bm{v}-\widetilde{\Pi}_h\bm{v}),\nabla^\perp w_h)\lesssim h^2|\log h|^\frac{1}{2}\|\bm{v}\|_{W^2_\infty(\Omega)}\|\nabla^\perp w_h\|.
\end{equation*}
\end{lemma}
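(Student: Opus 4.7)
The plan is to exploit the piecewise uniform structure of $\Th$ by splitting $\Omega$ into a uniform-mesh interior, where parallelogram-pairing produces a superclose cancellation, and a thin boundary strip around the patch interfaces, where uniformity fails and a H\"older-type estimate with a logarithmic factor is used. Put $\Gamma := \bigcup_{i=1}^N \partial\Omega_i$, let $S_h$ be the union of all triangles of $\Th$ touching $\Gamma$ (so $|S_h| \lesssim h$), and $\Omega_h^u := \Omega \setminus S_h$. Because $\Th|_{\Omega_i}$ is uniform, every triangle in $\Omega_h^u$ pairs uniquely with an edge-adjacent triangle inside the same block to form a full parallelogram $P$, and these tile $\Omega_h^u$. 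Split the integral as $(\bm{D}(\bm{v} - \widetilde{\Pi}_h\bm{v}), \curl w_h) = I_{\text{int}} + I_{\text{str}}$ accordingly.

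For the interior contribution, fix a parallelogram $P = K_1 \cup K_2$ with center $\bm{c}_P$. Since $w_h$ is continuous piecewise linear, $\curl w_h$ is piecewise constant on $P$, equal to $\bar{\bm{\mu}}_P \pm \bm{\delta}_P$ on $K_1$ and $K_2$. The central step I would establish is the parallelogram cancellation
\begin{equation*}
    \Bigl|\int_P (\bm{v} - \widetilde{\Pi}_h\bm{v})\,dx\Bigr| \lesssim h^4 \,\|\bm{v}\|_{W^2_\infty(P)},
\end{equation*}
obtained by Taylor-expanding $\bm{v}$ around $\bm{c}_P$ and using that $P$ is centrally symmetric about $\bm{c}_P$: the reflection maps the three RT0 edge degrees of freedom on $K_1$ to those on $K_2$, so after integration over $P$ the constant and linear Taylor terms are reproduced exactly by $\widetilde{\Pi}_h$, leaving only the $O(h^2)$ second-order remainder to contribute, which is then integrated over $|P| \lesssim h^2$. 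This bounds the $\bar{\bm{\mu}}_P$ piece of the local contribution by $h^4\|\bm{v}\|_{W^2_\infty(P)}|\bar{\bm{\mu}}_P|$. For the oscillatory piece, use the pointwise bound $\|\bm{v}-\widetilde{\Pi}_h\bm{v}\|_{L^\infty(P)} \lesssim h\|\bm{v}\|_{W^2_\infty(P)}$ together with the identification of $\bm{\delta}_P$ as a directional second-order difference of $w_h$ across the shared edge of $K_1,K_2$; telescoping these differences along the mesh directions inside each uniform block yields $\sum_P |\bm{\delta}_P|^2\,|P| \lesssim \|\curl w_h\|^2$. Combining both pieces and applying Cauchy--Schwarz then gives $|I_{\text{int}}| \lesssim h^2\|\bm{v}\|_{W^2_\infty(\Omega)}\|\curl w_h\|$.

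For the strip contribution, I would use $|S_h| \lesssim h$ together with the standard bound $\|\bm{v}-\widetilde{\Pi}_h\bm{v}\|_{L^\infty} \lesssim h\|\bm{v}\|_{W^2_\infty}$ to obtain $|I_{\text{str}}| \lesssim h\,\|\bm{v}\|_{W^2_\infty}\,\|\curl w_h\|_{L^1(S_h)}$. A discrete logarithmic Sobolev--type inequality for the continuous piecewise linear $w_h$ in two dimensions (cf.~\cite{YL2018,BaLi2019}) yields $\|\curl w_h\|_{L^1(S_h)} \lesssim h\,|\log h|^{1/2}\,\|\curl w_h\|_{L^2(\Omega)}$, by combining $|S_h|\lesssim h$ with a weighted $L^2$-type control of $\curl w_h$ near $\Gamma$. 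Putting these together yields $|I_{\text{str}}| \lesssim h^2|\log h|^{1/2}\|\bm{v}\|_{W^2_\infty}\|\curl w_h\|$, which combines with the interior bound to give the claimed estimate.

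The main obstacle is the parallelogram cancellation identity on $\Omega_h^u$: it relies on the specific geometric symmetry of a triangulated parallelogram, not merely shape-regularity, interacting with the very limited reproducibility of the lowest-order RT interpolant, and must be combined with a careful telescoping of the local oscillations $\bm{\delta}_P$ inside each uniform block. Once this local $O(h^4)$-cancellation and the accompanying $\ell^2$-bound on $\bm{\delta}_P$ are in hand, the boundary-strip estimate---the source of the logarithmic factor---is comparatively routine.
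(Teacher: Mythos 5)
Your overall architecture (parallelogram cancellation in the uniform interior, a separate treatment near the patch interfaces, a logarithm from a discrete Sobolev-type bound) matches the spirit of the paper's proof, but two of your key steps do not close, and both are points where the paper does something structurally different.

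First, the oscillatory piece $\bm{\delta}_P$ in your interior estimate loses an order of $h$. On a parallelogram $P=K_1\cup K_2$ the jump of $\curl w_h$ across the shared edge is generically $O(1)$ relative to the local $L^2$ mass of $\curl w_h$, so bounding $\|\bm{v}-\widetilde{\Pi}_h\bm{v}\|_{L^\infty(P)}\lesssim h$ and $\sum_P|\bm{\delta}_P|^2|P|\lesssim\|\curl w_h\|^2$ only yields $O(h)\,\|\bm{v}\|_{W^2_\infty}\|\curl w_h\|$ for that piece, not $O(h^2)$; no telescoping of an $\ell^2$-bounded sequence recovers the missing factor. The paper sidesteps this entirely: it writes $\bm{D}(\bm{v}-\widetilde{\Pi}_h\bm{v})\cdot\curl w_h=\sum_{j=1,2}[\bm{e}_j\bm{S}^{-T}\bm{D}(\bm{v}-\widetilde{\Pi}_h\bm{v})](\bm{f}_j^T\curl w_h)$ with $\bm{f}_j$ the normals to two edge families, and for the $j$-th term pairs triangles across edges orthogonal to $\bm{f}_j$. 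Since $\bm{f}_j^T\curl w_h$ equals the tangential derivative of $w_h$ along such an edge, it is single-valued across it and hence \emph{constant on the whole parallelogram}; there is no oscillatory part to estimate. Choosing the pairing direction adapted to each component of $\curl w_h$ is the idea your single fixed pairing misses.

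Second, your boundary-strip bound rests on $\|\curl w_h\|_{L^1(S_h)}\lesssim h|\log h|^{1/2}\|\curl w_h\|$, which is false in general: Cauchy--Schwarz gives only $|S_h|^{1/2}\|\curl w_h\|\lesssim h^{1/2}\|\curl w_h\|$, and this is sharp for $w_h$ whose gradient concentrates near $\Gamma$, so your strip term is only $O(h^{3/2})$. In the paper the boundary contribution is not estimated crudely on a strip; the sum over boundary edges is rearranged (summation by parts in the endpoints $P$), so that for non-corner nodes one gets \emph{differences} of neighboring integrals $\int_{N_{E_1}}-\int_{N_{E_2}}$ of $\bm{v}-\widetilde{\Pi}_h\bm{v}$, which enjoy an extra cancellation of order $h^3|\bm{v}|_{H^2(\omega_P)}$ (Hu--Ma), while the remaining weight is $w_h(P)$ itself, controlled by $\|w_h\|_{L^\infty}\lesssim|\log h|^{1/2}\|\curl w_h\|$. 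The logarithm enters through this pointwise bound on $w_h$, not through an $L^1$ control of $\curl w_h$ near the interfaces. As written, both your interior and your strip estimates fall short of the claimed $h^2|\log h|^{1/2}$ rate.
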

In \cite{Li2018SINUM}, one of the author proved Lemma \ref{varerror} on mildly structured grids when $\bm{D}=\bm{I}$. For the general anisotropic $\bm{D}$, we could not find a complete proof in literature. In the appendix, we give a detailed proof of Lemma \ref{varerror} using the technique in Theorem 3.2 of \cite{Brandts1994} and Lemma 3.4 of \cite{HuMaMa2021}. Now we present a supercloseness estimate for $\bm{\xi}_h$, the second main result, in the next theorem.
\begin{theorem}\label{supersigma}
Let $\Omega$ be simply-connected and $\Th$ be a piecewise uniform grid.
For \eqref{mixed} using the lowest order RT element and sufficiently small $h$, it holds that
$$\|\bxih\|\lesssim h^2|\log h|^\frac{1}{2}(\|\bm{\sigma}\|_{W^2_\infty(\Omega)}+\|\bu\|_2).$$
\end{theorem}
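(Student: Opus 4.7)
The plan is to split $\bm{\xi}_h$ via the discrete Helmholtz decomposition \eqref{decomp} as $\bm{\xi}_h = \bm{\xi}_h^0 + \bm{\xi}_h^\perp$ with $\bm{\xi}_h^0 \in \bm{Z}_h$ and $\bm{\xi}_h^\perp \in \bm{Z}_h^\perp$, and to estimate the two pieces separately. Three observations will be used repeatedly: $\divg\bm{\xi}_h^0 = \bm{0}$ forces $\divg\bm{\xi}_h = \divg\bm{\xi}_h^\perp$; the $\Lambda$-orthogonality reduces to the $\CA$-orthogonality $(\CA\bm{\xi}_h^0,\bm{\xi}_h^\perp)=0$; and by \eqref{rb} the norms $\|\cdot\|_\CA$ and $\|\cdot\|$ are equivalent on $\bm{Z}_h$.

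For the complement piece, combining Lemma \ref{disPoin} with \eqref{rb} gives $\|\bm{\xi}_h^\perp\|\lesssim\|\bm{\xi}_h^\perp\|_\CA+\|\divg\bm{\xi}_h^\perp\|_{-1}\lesssim\|\divg\bm{\xi}_h\|_{-1}$. Feeding Lemma \ref{divneg} into this, using Theorem \ref{apriori} and \eqref{approx} (with $r=0$) to see that $\vertiii{\bm{\zeta}_h}$, $\vertiii{\bm{\xi}_h}$, and $\|\bm{\rho}_h\|$ are all $O(h)$, and invoking the $O(h^2)$ supercloseness \eqref{ratesuperu} on $\|\bm{e}_h\|$, yields $\|\bm{\xi}_h^\perp\|\lesssim h^2(\|\bm{\sigma}\|_2+\|\bm{u}\|_2)$, which already beats the target bound without any log factor.

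For the divergence-free piece, simple-connectedness of $\Omega$ and exactness of \eqref{discomplex} at $\bm{\Sigma}_h$ produce $\bm{w}_h\in\bm{W}_h$ with $\bm{\xi}_h^0=\curl\bm{w}_h$. Testing \eqref{error1} against $\bm{\tau}_h=\bm{\xi}_h^0$ kills the divergence term, so $(\CA(\bm{\sigma}-\bm{\sigma}_h),\bm{\xi}_h^0)=0$ and thus $(\CA\bm{\xi}_h,\bm{\xi}_h^0)=-(\CA\bm{\zeta}_h,\curl\bm{w}_h)$. The $\CA$-orthogonality and norm equivalence on $\bm{Z}_h$ then give $\|\bm{\xi}_h^0\|^2\sim(\CA\bm{\xi}_h,\bm{\xi}_h^0)=-(\CA\bm{\zeta}_h,\curl\bm{w}_h)$. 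To bound the right-hand side I would first observe that $\Pi_h\bm{\sigma}-\widetilde{\Pi}_h\bm{\sigma}$ is a constant multiple of $\bm{I}$, hence lies in $\ker\CA$, so $\CA\bm{\zeta}_h=\CA(\bm{\sigma}-\widetilde{\Pi}_h\bm{\sigma})$. Expanding the deviatoric part row by row reduces $(\CA\bm{\zeta}_h,\curl\bm{w}_h)$ to a finite sum of (i) identity-part pairings $(\bm{\sigma}_i-\widetilde{\Pi}_h\bm{\sigma}_i,\curl w_{h,i})$ and (ii) trace-part scalar pairings $((\bm{\sigma}_i-\widetilde{\Pi}_h\bm{\sigma}_i)_j,\partial_{x_k}w_{h,l})$. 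Each scalar pairing can be recast as $(\bm{D}(\bm{\sigma}_i-\widetilde{\Pi}_h\bm{\sigma}_i),\curl w_{h,l})$ for a suitable constant $2\times 2$ matrix $\bm{D}$ (for example $\bm{D}=\bm{e}_2\bm{e}_1^T$ rewrites $((\bm{\sigma}_i-\widetilde{\Pi}_h\bm{\sigma}_i)_1,\partial_{x_1}w_{h,l})$ into that form), and Lemma \ref{varerror} furnishes an $h^2|\log h|^{1/2}\|\bm{\sigma}\|_{W^2_\infty(\Omega)}\|\curl\bm{w}_h\|$ bound on each term. Since $\|\curl\bm{w}_h\|=\|\bm{\xi}_h^0\|$, division yields the stated bound on $\|\bm{\xi}_h^0\|$, and a triangle inequality with the complement estimate completes the proof.

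The main obstacle is the algebraic bookkeeping in the last paragraph: recognizing that the singular part of $\CA$ is annihilated by the constant correction in $\Pi_h\bm{\sigma}$, and recasting each residual scalar pairing into the precise form $(\bm{D}(\cdot),\curl\cdot)$ covered by Lemma \ref{varerror}. Everything else is a routine combination of Lemma \ref{disPoin}, Lemma \ref{divneg}, Theorem \ref{apriori}, and the $\|\bm{e}_h\|$-supercloseness \eqref{ratesuperu}.
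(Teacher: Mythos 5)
Your proposal is correct and follows essentially the same route as the paper: the same discrete Helmholtz splitting, Lemma \ref{disPoin} plus Lemma \ref{divneg} and \eqref{ratesuperu} for the $\bm{Z}_h^\perp$ component, and the error equation \eqref{error1} together with the reduction of $(\CA\bm{\zeta}_h,\curl\bm{w}_h)$ to constant-matrix pairings $(\bm{D}(\bm{\sigma}_i-\widetilde{\Pi}_h\bm{\sigma}_i),\curl w_{h,j})$ handled by Lemma \ref{varerror} for the divergence-free component. The paper simply makes the bookkeeping explicit by writing out the four matrices $\bm{D}_{ij}$, which your argument correctly anticipates.
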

\begin{proof}
When $\Omega$ is simply connected,
\eqref{discomplex} is an exact sequence, i.e., $\nabla^\perp\bm{W}_h=\bm{Z}_h.$ It then follows from the exactness and the discrete Helmholtz decomposition \eqref{decomp} that 
\begin{equation}
    \begin{aligned}
    \bm{\xi}_h=\nabla^\perp\bm{\phi}_h\oplus_\Lambda\bm{\eta}_h,
    \end{aligned}
\end{equation}
where $\bm{\phi}_h\in\bm{W}_h,$  $\bm{\eta}_h\in\bm{Z}_h^\perp.$ Using Lemma \ref{disPoin} and $\nabla\cdot\nabla^\perp\bm{\phi}_h=\bm{0}$, we obtain
\begin{equation}\label{Zperppart}
    \|\bm{\eta}_h\|_\CA\lesssim\|\nabla\cdot\bm{\eta}_h\|_{-1}=\|\nabla\cdot\bm{\xi}_h\|_{-1}.
\end{equation}
It remains to estimate $\|\nabla^\perp\bm{\phi}_h\|_\CA$. Note that $$(\CA\nabla^\perp\bm{\phi}_h,\bm{\eta}_h)=\Lambda(\nabla^\perp\bm{\phi}_h,\bm{\eta}_h)=0.$$ Then using the orthogonality, \eqref{error1}, and  \eqref{adjA}, we have
\begin{equation*}
    \begin{aligned}
    &\|\nabla^\perp\pih\|_\CA^2=(\ma(\Pi_h\bm{\sigma}-\bm{\sigma}_h),\nabla^\perp\pih)\\
    &=(\ps-\s,\CA\nabla^\perp\pih)=(\tps-\s,\CA\nabla^\perp\pih)\\
    &=-(\s-\tps,\nabla^\perp\pih)-\frac{1}{2}(\s-\tps,(\nabla\cdot\bm{\phi}^\perp_h)\bm{I}).
    \end{aligned}
\end{equation*}
Recall that $\bm{\sigma}_i$ denotes the $i$-th \emph{row} of $\bm{\sigma}.$ Let $\bm{e}_1=(1,0)$, $\bm{e}_2=(0,1)$, and $\bm{\phi}_h=(\phi_{h,1},\phi_{h,2})^\top$. It follows from the previous equation and direct calculation that
\begin{equation}\label{Zpartinter}
    \begin{aligned}
    &\|\nabla^\perp\pih\|_\CA^2=\sum_{i=1}^2\left\{-(\s_i-\widetilde{\Pi}_h\bm{\sigma}_i,\nabla^\perp{\phi}_{h,i})\right.\\
    &\quad\left.+\frac{1}{2}(\s_i-\tps_i,(\partial_{x_1}{\phi}_{h,2})\bm{e}_i)-\frac{1}{2}(\s_i-\tps_i,(\partial_{x_2}{\phi}_{h,1})\bm{e}_i)\right\}\\
    &\quad=\sum_{i,j=1}^2(\bm{D}_{ij}(\s_i-\widetilde{\Pi}_h\bm{\sigma}_i)^\top,\nabla^\perp\phi_{h,j}),
    \end{aligned}
\end{equation}
where 
$$\bm{D}_{11}=\begin{pmatrix}-\frac{1}{2}&0\\0&-1\end{pmatrix},~\bm{D}_{12}=\begin{pmatrix}0&0\\ \frac{1}{2}&0\end{pmatrix},~\bm{D}_{21}=\begin{pmatrix}0&\frac{1}{2}\\0&0\end{pmatrix},~\bm{D}_{22}=\begin{pmatrix}-1&0\\0&-\frac{1}{2}\end{pmatrix}.$$ 
Then using \eqref{Zpartinter}, Lemma \ref{varerror} and \eqref{rb} with $\bm{\tau}=\nabla^\perp\bm{\phi}_h$, we have 
\begin{equation}\label{Zpart}
\begin{aligned}
\|\nabla^\perp\pih\|_\CA^2&\lesssim h^2|\log h|^\frac{1}{2}\|\bm{\sigma}\|_{W^2_\infty(\Omega)}\|\nabla^\perp\bm{\phi}_{h}\|\\
&\lesssim h^2|\log h|^\frac{1}{2}\|\bm{\sigma}\|_{W^2_\infty(\Omega)}\|\nabla^\perp\bm{\phi}_{h}\|_\CA.
\end{aligned}
\end{equation}
Combining \eqref{rb} with \eqref{Zperppart}, \eqref{Zpart}, \eqref{divxin1}, \eqref{ratesuperu}, we obtain
\begin{equation*}
\begin{aligned}
\|\bm{\xi}_h\|&\lesssim\|\bm{\xi}_h\|_\CA+\|\nabla\cdot\bm{\xi}_h\|_{-1}\\
&\lesssim\|\nabla^\perp\bm{\phi}_h\|_\CA+\|\bm{\eta}_h\|_\CA+\|\nabla\cdot\bm{\xi}_h\|_{-1}\\
&\lesssim h^2|\log h|^\frac{1}{2}\big(\|\bm{\sigma}\|_{W^2_\infty(\Omega)}+\|\bm{u}\|_2\big).
\end{aligned}
\end{equation*}
The proof is complete.
\qed\end{proof}

The supercloseness estimate on $\|\bm{\xi}_h\|$ can be generalized on rectangular meshes. Throughout the rest of this section, let $\Th$ be a rectangular mesh, 
\begin{align*}
    {W}_h&=\left\{w_h\in {H}^{1}(\Omega): {w}_h|_K\in\mathcal{Q}_{r+1}(K)\text{ for }K\in\Th\right\},\\
    \bm{W}_h&=\left\{\bm{w}_h\in [W_h]^2:~\int_{\Omega}\nabla^\perp \bm{w}_hdx=0\right\},
\end{align*}
and $\bm{\Sigma}_h\times\bm{V}_h$ be based on the rectangular RT element \eqref{rectRT} with $d=2$. When $\Omega$ is simply-connected, we still have the discrete exact sequence \eqref{discomplex}. Similarly to the case of triangular grids, we need the uniform mesh structure. \begin{definition}
We say $\Th$ is a uniformly rectangular mesh if all the rectangles of $\Th$ are of the same shape and size. 
\end{definition}
For $w_h\in{W}_h$ with $(\nabla^\perp w_h)\cdot\bm{n}|_{\partial\Omega}=0$, Theorem 5.1 of \cite{EwingLiuWang1999} implies
\begin{equation*}
    (\bm{\bm{D}}(\bm{v}-\widetilde{\Pi}_h\bm{v}),\nabla^\perp w_h)\lesssim h^{r+2}\|\bm{v}\|_{r+2}\|\nabla^\perp w_h\|,
\end{equation*}
where $\bm{v}\in[H^{r+2}(\Omega)]^2$. The previous estimate is not true when $(\nabla^\perp w_h)\cdot\bm{n}\neq 0$ on $\partial\Omega$. As claimed in the remark in Example 6.2 of \cite{EwingLiuWang1999},  it holds that
\begin{equation}\label{varrect}
    (\bm{\bm{D}}(\bm{v}-\widetilde{\Pi}_h\bm{v}),\nabla^\perp w_h)\leq  Ch^{r+1.5}\|\nabla^\perp w_h\|,\quad w_h\in W_h
\end{equation}
for some absolute constant $C$ independent of $h$.  
Following the same argument in the proof of Theorem \ref{supersigma} and replacing Lemma \ref{varerror} by \eqref{varrect}, we actually obtain the supercloseness estimate
$$\|\Pi_h\bm{\sigma}-\bm{\sigma}_h\|=\mathcal{O}(h^{r+1.5}).$$
Similar estimates may hold on $h^2$-uniform quadrilateral grids described in \cite{EwingLiuWang1999}.
\section{Postprocessing}\label{secpost} We have derived supercloseness estimates in Lemma \ref{superu} and Theorem \ref{supersigma}. However, those results can not be directly used because $P_h\bm{u}$ and $\Pi_h\bm{\sigma}$ are not known in practice. To extract superconvergence information from the smallness of $\bm{e}_h$ and $\bm{\xi}_h$, one may design easy-to-compute postprocessed solutions $\bm{u}_h^*$ and $\bm{\sigma}_h^*$. For instance, 
following the idea of \cite{Stenberg1991}, a element-by-element postprocessing procedure for $\bm{u}_h$ in the pseudostress-velocity formulation of the Stokes equation is proposed in \cite{CKP2011}. In particular, let
$$\mathcal{P}_{r+1}(K)/\mathcal{P}_{r}(K):=\{\bm{v}_h\in\mathcal{P}_{r+1}(K): (\bm{v}_h,\bm{w}_h)_K=0\text{ for all }\bm{w}_h\in\mathcal{P}_r(K)\}.$$
where $(\cdot,\cdot)_K$ is the $L^2(K)$-inner product. Note that the pressure could be recovered as $$p_h:=-\frac{1}{d}\text{Tr}\bm{\sigma}_h\approx p.$$ 
For each $K\in\Th,$
the postprocessed solution  $\bm{u}_h^*|_K\in\mathcal{P}_{r+1}(K)$ is determined by
\begin{equation}
    \begin{aligned}
    \nu(\nabla\bm{u}_h^*,\nabla\bm{v}_h)_K&=(\bm{\sigma}_h,\nabla\bm{v}_h)_K+(p_h,\nabla\cdot\bm{v}_h)_K,~\bm{v}_h\in\mathcal{P}_{r+1}(K)/\mathcal{P}_{r}(K),\\
    (\bm{u}_h^*,\bar{\bm{v}}_h)_K&=(\bm{u}_h,\bar{\bm{v}}_h)_K,\quad\bar{\bm{v}}_h\in\mathcal{P}_{r}(K).
    \end{aligned}
\end{equation}
Since the analysis of the mapping $\bm{u}_h\mapsto\bm{u}_h^*$ is independent of equations, we can combine the analysis in Theorem 4.1 of \cite{CKP2011} for the Stokes equation with the supercloseness on $\|P_h\bm{u}-\bm{u}_h\|$ in Theorem \ref{superu} to obtain the following postprocessing superconvergence estimate. 
\begin{theorem}\label{recoveryu}
For sufficiently small $h$, it holds that
\begin{equation*}
    \|\bm{u}-\bm{u}^*_h\|\lesssim h^{r+2}\big(\|\bm{u}\|_{r+2}+|\bm{\sigma}|_{r+1}+|\nabla\cdot\bm{\sigma}|_{r+1}\big).
\end{equation*}
\end{theorem}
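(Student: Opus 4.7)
The strategy is to follow Stenberg's framework, as in Theorem~4.1 of \cite{CKP2011}: compare $\bm{u}_h^*$ to an ``ideal'' element-wise postprocessed solution $\widetilde{\bm{u}}_h^*$ constructed from the projected exact data. For each $K\in\Th$, define $\widetilde{\bm{u}}_h^*|_K\in[\mathcal{P}_{r+1}(K)]^d$ by
\begin{equation*}
(\widetilde{\bm{u}}_h^*,\bm{v}_h)_K=(P_h\bm{u},\bm{v}_h)_K,\quad \bm{v}_h\in[\mathcal{P}_r(K)]^d,
\end{equation*}
together with
\begin{equation*}
(\nabla\widetilde{\bm{u}}_h^*,\nabla\bm{v}_h)_K=(\mathcal{A}\Pi_h\bm{\sigma},\nabla\bm{v}_h)_K,\quad \bm{v}_h\in\mathcal{P}_{r+1}(K)/\mathcal{P}_r(K),
\end{equation*}
and split $\|\bm{u}-\bm{u}_h^*\|\le\|\bm{u}-\widetilde{\bm{u}}_h^*\|+\|\widetilde{\bm{u}}_h^*-\bm{u}_h^*\|$. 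I will bound each piece by $h^{r+2}$ times the appropriate norms.

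For the first term I exploit the identity $\mathcal{A}\bm{\sigma}=\nabla\bm{u}$, which follows from the incompressibility $\divg\bm{u}=0$ and shows that $\bm{u}$ itself satisfies the continuous analogue of the local defining system. Introducing an auxiliary polynomial $I_K\bm{u}\in[\mathcal{P}_{r+1}(K)]^d$ whose $L^2$-projection onto $[\mathcal{P}_r(K)]^d$ equals $P_h\bm{u}$ and whose $\mathcal{P}_{r+1}(K)/\mathcal{P}_r(K)$-component matches $\nabla\bm{u}$ through the gradient equation, a standard Bramble--Hilbert argument gives $\|\bm{u}-I_K\bm{u}\|_K\lesssim h_K^{r+2}|\bm{u}|_{r+2,K}$. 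The residual $I_K\bm{u}-\widetilde{\bm{u}}_h^*|_K$ lies in $\mathcal{P}_{r+1}(K)/\mathcal{P}_r(K)$ and its gradient is driven by $\mathcal{A}(\bm{\sigma}-\Pi_h\bm{\sigma})=\mathcal{A}\bm{\zeta}_h$; the local Poincar\'e inequality on $\mathcal{P}_{r+1}(K)/\mathcal{P}_r(K)$ (which holds because elements of that space have zero mean on $K$) yields $\|I_K\bm{u}-\widetilde{\bm{u}}_h^*|_K\|_K\lesssim h_K\|\bm{\zeta}_h\|_K$. Summing and applying \eqref{approx} delivers $\|\bm{u}-\widetilde{\bm{u}}_h^*\|=O(h^{r+2})$.

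For the second term, $\bm{u}_h^*-\widetilde{\bm{u}}_h^*|_K$ solves a local problem of the same structure with data $\bm{u}_h-P_h\bm{u}$ in the $L^2$ equation and $-\mathcal{A}\bm{\xi}_h$ in the gradient equation; the same Poincar\'e-based stability gives
\begin{equation*}
\|\bm{u}_h^*-\widetilde{\bm{u}}_h^*\|\lesssim\|P_h\bm{u}-\bm{u}_h\|+h\|\Pi_h\bm{\sigma}-\bm{\sigma}_h\|.
\end{equation*}
The supercloseness bound \eqref{ratesuperu} gives the first term at rate $h^{r+2}$, while \eqref{convrateb} combined with the triangle inequality and \eqref{approx} yields $\|\Pi_h\bm{\sigma}-\bm{\sigma}_h\|=O(h^{r+1})$, so the whole right-hand side is $O(h^{r+2})$. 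Combining the two contributions finishes the proof.

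The substantive new ingredient is the supercloseness estimate \eqref{ratesuperu} on $\|P_h\bm{u}-\bm{u}_h\|$, whose derivation had to handle the convection term $\bm{b}\cdot\mathcal{A}\bm{\sigma}$ and the zeroth-order coefficient $c$ present in the Oseen system; once that is in hand, the postprocessing step is purely local and PDE-independent, so the Stokes argument of \cite{CKP2011} transfers with only minor bookkeeping. The anticipated delicacy lies entirely in the consistency step for $\widetilde{\bm{u}}_h^*$, where the identity $\mathcal{A}\bm{\sigma}=\nabla\bm{u}$ must be invoked to recognize $\bm{u}$ as the continuous counterpart of the local problem.
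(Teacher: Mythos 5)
Your proof is correct and takes essentially the same route as the paper: a Stenberg-type local analysis of the postprocessing map that reduces everything to the supercloseness bound \eqref{ratesuperu} on $\|P_h\bm{u}-\bm{u}_h\|$ plus one extra power of $h$ (from the local Poincar\'e inequality on $\mathcal{P}_{r+1}(K)/\mathcal{P}_r(K)$) multiplying the $O(h^{r+1})$ pseudostress error. The only difference is bookkeeping: the paper compares $\bm{u}_h^*$ with the $L^2$-projection of $\bm{u}$ onto piecewise $\mathcal{P}_{r+1}$ and cites the local stability estimate \eqref{ipterm} from \cite{CKP2011}, whereas you re-derive the same stability self-containedly through an ideal postprocessed solution built from $P_h\bm{u}$ and $\mathcal{A}\Pi_h\bm{\sigma}$.
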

\begin{proof}
Let $\bar{\bm{u}}_h$ be the $L^2$-projection of $\bm{u}$ onto the space 
\begin{equation*}
    \left\{\bm{v}_h: \bm{v}_h|_K\in[\mathcal{P}_{r+1}(K)]^d\text{ for all }K\in\mathcal{T}_h\right\}.
\end{equation*}
It has been shown in the proof of Theorem 4.1 in \cite{CKP2011} that 
\begin{equation}\label{ipterm}
\begin{aligned}
    &h^{-1}\|(\bm{I}-P_h)(\bar{\bm{u}}_h-\bm{u}_h^*)\|\lesssim h^{-1}\|P_h(\bar{\bm{u}}_h-\bm{u}_h^*)\|\\
    &\quad+\|\bm{\sigma}-\bm{\sigma}_h\|+\left(\sum_{K\in\Th}\|\nabla(\bar{\bm{u}}_h-\bm{u})\|^2_{L^2(K)}\right)^\frac{1}{2}.
    \end{aligned}
\end{equation}
Using the triangle inequality, $P_h\bar{\bm{u}}_h=P_h\bm{u},$  $P_h{\bm{u}}^*_h=\bm{u}_h$, and \eqref{ipterm},  we obtain
\begin{equation*}
    \begin{aligned}
    \|\bm{u}-\bm{u}^*_h\|&\leq\|\bm{u}-\bar{\bm{u}}_h\|+\|(\bm{I}-P_h)(\bar{\bm{u}}_h-\bm{u}^*_h)\|+\|P_h(\bar{\bm{u}}_h-\bm{u}^*_h)\|\\
    &\lesssim h^{r+2}|\bm{u}|_{r+2}+\|P_h\bm{u}-\bm{u}_h\|+h\|\bm{\sigma}-\bm{\sigma}_h\|.
    \end{aligned}
\end{equation*}
We finally conclude the proof from the previous estimate with \eqref{ratesuperu} and \eqref{convrateb}.
\qed\end{proof}

Postprocessing technique on the scalar variable in mixed methods for Poisson's equation can be found in e.g.,  \cite{BrezziDouglasMarini1985,BrambleXu1989,Stenberg1991,LovadinaStenberg2006}.

The postprocessing procedure $\bm{\sigma}_h\mapsto\bm{\sigma}_h^*$ can be derived from existing postprocessing operator $\widetilde{R}_h$. In particular,  $\widetilde{R}_h: \widetilde{\bm{\Sigma}}_h\rightarrow Y\subset[L^2(\Omega)]^2$ is a linear mapping onto the space $Y$ of suitable piecewise polynomials and satisfies 
\begin{subequations}\label{Rtilde}
\begin{align}
\widetilde{R}_h\bm{c}&=\bm{c},\quad\bm{c}\in\mathbb{R}^2,\label{preI}\\
\|\widetilde{R}_h\bm{v}\|&\lesssim \|\bm{v}\|,\quad\bm{v}\in\widetilde{\bm{\Sigma}}_h,\label{Rtildebd}\\
\|\bm{\tau}-\widetilde{R}_h\widetilde{\Pi}_h\bm{\tau}\|&\lesssim h^{1+\alpha}\|\bm{\tau}\|_{W^2_\infty(\Omega)},\label{Rtildeapprox}
\end{align}
\end{subequations}
for some positive constant $\alpha$ and sufficiently smooth $\bm{\tau}.$ When $\widetilde{\bm{\Sigma}}_h$ is the lowest order RT element space, $\widetilde{R}_h$ that satisfies \eqref{Rtilde} is given in e.g., \cite{Brandts1994,BankLi2019}. The simple nodal or edge averaging \cite{ZZ1987,Brandts1994} and superconvergent patch recovery \cite{ZZ1992a,XuZhang2004,BankLi2019} are also possible choices.  

For $\bm{\tau}\in\bm{\Sigma}_h$, let 
$$R_h\bm{\tau}:=\widetilde{R}_h\bm{\tau}-\frac{1}{2|\Omega|}\left(\int_\Omega\text{Tr}\widetilde{R}_h\bm{\tau}dx\right)\bm{I},$$
where $\widetilde{R}_h$ is applied to each row of $\bm{\tau}$. We have the following super-approximation result of ${R}_h.$
\begin{lemma}\label{superapprox}
Assume \eqref{Rtilde} holds. For $\bm{\tau}\in\bm{\Sigma}\cap[W^2_\infty(\Omega)]^{2\times2}$, we have
$$\|\bm{\tau}-{R}_h{\Pi}_h\bm{\tau}\|\lesssim h^{1+\alpha}\|\bm{\tau}\|_{W^2_\infty(\Omega)}.$$
\end{lemma}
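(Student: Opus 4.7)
The plan is to reduce the estimate for the matrix-valued operators $R_h\Pi_h$ to the known super-approximation property \eqref{Rtildeapprox} for the underlying vector-valued operators $\widetilde{R}_h \widetilde{\Pi}_h$, with the two trace corrections (one inside $\Pi_h$, one inside $R_h$) being the only new element to track.

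First I would unwind the definitions. Since $\widetilde{R}_h$ acts row-wise and preserves constant vectors by \eqref{preI}, applying $\widetilde{R}_h$ row-wise to $\frac{c}{d}\bm{I}$ (where $c = \frac{1}{|\Omega|}\int_\Omega \tr\widetilde{\Pi}_h\bm{\tau}\,dx$) simply returns $\frac{c}{d}\bm{I}$. Hence
\begin{equation*}
\widetilde{R}_h\Pi_h\bm{\tau} = \widetilde{R}_h\widetilde{\Pi}_h\bm{\tau} - \frac{1}{d|\Omega|}\Bigl(\int_\Omega \tr\widetilde{\Pi}_h\bm{\tau}\,dx\Bigr)\bm{I}.
\end{equation*}
Taking the trace of both sides, integrating over $\Omega$, and plugging the result back into the definition of $R_h$, the two mean-trace constants cancel, yielding
\begin{equation*}
R_h\Pi_h\bm{\tau} = \widetilde{R}_h\widetilde{\Pi}_h\bm{\tau} - \frac{1}{d|\Omega|}\Bigl(\int_\Omega \tr\widetilde{R}_h\widetilde{\Pi}_h\bm{\tau}\,dx\Bigr)\bm{I}.
\end{equation*}
In other words, $R_h\Pi_h\bm{\tau}$ is exactly $\widetilde{R}_h\widetilde{\Pi}_h\bm{\tau}$ with its mean trace subtracted off.

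Next, I would use the defining property $\int_\Omega \tr\bm{\tau}\,dx = 0$ of $\bm{\Sigma}$ to rewrite the error as
\begin{equation*}
\bm{\tau} - R_h\Pi_h\bm{\tau} = \bigl(\bm{\tau} - \widetilde{R}_h\widetilde{\Pi}_h\bm{\tau}\bigr) + \frac{1}{d|\Omega|}\Bigl(\int_\Omega \tr\bigl(\widetilde{R}_h\widetilde{\Pi}_h\bm{\tau} - \bm{\tau}\bigr)\,dx\Bigr)\bm{I}.
\end{equation*}
By the triangle inequality and Cauchy--Schwarz (applied to the scalar integral), and using $\|\bm{I}\|_{L^2(\Omega)} = \sqrt{d\,|\Omega|}$, we obtain
\begin{equation*}
\|\bm{\tau} - R_h\Pi_h\bm{\tau}\| \lesssim \|\bm{\tau} - \widetilde{R}_h\widetilde{\Pi}_h\bm{\tau}\|.
\end{equation*}

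Finally, applying \eqref{Rtildeapprox} row-wise (once per row of the matrix) gives $\|\bm{\tau} - \widetilde{R}_h\widetilde{\Pi}_h\bm{\tau}\| \lesssim h^{1+\alpha}\|\bm{\tau}\|_{W^2_\infty(\Omega)}$, and the lemma follows. The only genuinely delicate step is the algebraic cancellation at the beginning: without the constant-preservation hypothesis \eqref{preI}, the two trace corrections would not telescope, and the identity reducing $R_h\Pi_h$ to a mean-corrected $\widetilde{R}_h\widetilde{\Pi}_h$ would fail. Beyond that, everything is bookkeeping plus a single invocation of \eqref{Rtildeapprox}.
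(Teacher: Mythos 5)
Your proposal is correct and follows essentially the same route as the paper: both arguments use the constant-preservation property \eqref{preI} to show that the two trace corrections collapse, arrive at the identity $\bm{\tau}-R_h\Pi_h\bm{\tau}=\bm{\tau}-\widetilde{R}_h\widetilde{\Pi}_h\bm{\tau}-\frac{1}{d|\Omega|}\bigl(\int_\Omega\tr(\bm{\tau}-\widetilde{R}_h\widetilde{\Pi}_h\bm{\tau})\,dx\bigr)\bm{I}$ using $\int_\Omega\tr\bm{\tau}\,dx=0$, and conclude by Cauchy--Schwarz and \eqref{Rtildeapprox}. The paper phrases the cancellation slightly more compactly as $R_h\bm{I}=\bm{O}$, hence $R_h\Pi_h\bm{\tau}=R_h\widetilde{\Pi}_h\bm{\tau}$, but this is the same algebra as your telescoping of the mean-trace constants.
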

\begin{proof}
\eqref{preI} implies $R_h\bm{I}=\bm{O}$  and thus ${R}_h{\Pi}_h\bm{\tau}={R}_h\widetilde{\Pi}_h\bm{\tau}.$ Then 
\begin{equation}\label{superapproxinter}
    \begin{aligned}
    &\bm{\tau}-{R}_h{\Pi}_h\bm{\tau}=\bm{\tau}-{R}_h\widetilde{\Pi}_h\bm{\tau}\\
    &\quad=\bm{\tau}-\widetilde{R}_h\widetilde{\Pi}_h\bm{\tau}-\frac{1}{2|\Omega|}\int_\Omega\text{Tr}(\bm{\tau}-\widetilde{R}_h\widetilde{\Pi}_h\bm{\tau})dx,
    \end{aligned}
\end{equation}
where we used $\int_\Omega\text{Tr}\bm{\tau}dx=0$. Lemma \ref{superapprox} then follows from \eqref{superapproxinter}, the Cauchy--Schwarz inequality, and \eqref{Rtildeapprox}.
\qed\end{proof}

Combining Lemma \ref{superapprox} and Theorem \ref{supersigma}, we obtain the following  superconvergence estimate by postprocessing.
\begin{theorem}\label{recoverysigma}
Let $\bm{\sigma}_h$ be the solution to \eqref{mixed} based on the lowest order RT element and $\bm{\sigma}^*_h:=R_h\bm{\sigma}_h$. Let the assumptions in Theorem \ref{supersigma} and Lemma \ref{superapprox} hold. We have 
\begin{equation*}
    \|\bm{\sigma}-\bm{\sigma}^*_h\|\lesssim\max(h^2|\log h|^\frac{1}{2},h^{1+\alpha})\big(\|\bm{\sigma}\|_{W^2_\infty(\Omega)}+\|\bm{u}\|_2\big).
\end{equation*}
\end{theorem}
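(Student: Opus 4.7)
The plan is to establish the bound by a triangle-inequality split that separates an \emph{interpolation-type} error, handled by the super-approximation property of the recovery operator $R_h$, from a \emph{discrete error} governed by the supercloseness estimate of Theorem \ref{supersigma}. Writing $\bm{\sigma}_h^\ast = R_h\bm{\sigma}_h$ and inserting $R_h\Pi_h\bm{\sigma}$, one obtains
\begin{equation*}
\|\bm{\sigma}-\bm{\sigma}_h^\ast\|
\le \|\bm{\sigma}-R_h\Pi_h\bm{\sigma}\| + \|R_h(\Pi_h\bm{\sigma}-\bm{\sigma}_h)\|
= \|\bm{\sigma}-R_h\Pi_h\bm{\sigma}\| + \|R_h\bm{\xi}_h\|,
\end{equation*}
where $\bm{\xi}_h=\Pi_h\bm{\sigma}-\bm{\sigma}_h$ is exactly the quantity controlled by the supercloseness analysis.

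For the first term, I would simply invoke Lemma \ref{superapprox}, which provides
$\|\bm{\sigma}-R_h\Pi_h\bm{\sigma}\|\lesssim h^{1+\alpha}\|\bm{\sigma}\|_{W^2_\infty(\Omega)}$,
producing the $h^{1+\alpha}$ contribution in the final estimate. For the second term, I would first apply the row-wise $L^2$-boundedness of $R_h$ inherited from $\widetilde{R}_h$ (property \eqref{Rtildebd} of the postprocessing operator), which yields $\|R_h\bm{\xi}_h\|\lesssim\|\bm{\xi}_h\|$. Then Theorem \ref{supersigma} gives
$\|\bm{\xi}_h\|\lesssim h^2|\log h|^{1/2}\bigl(\|\bm{\sigma}\|_{W^2_\infty(\Omega)}+\|\bm{u}\|_2\bigr)$,
furnishing the $h^2|\log h|^{1/2}$ contribution.

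Combining the two estimates and absorbing the seminorms into the stated right-hand side completes the proof: each of the two factors $h^{1+\alpha}$ and $h^2|\log h|^{1/2}$ provides an upper bound, so the smaller of the two (equivalently $\min(\cdot,\cdot)$) is also valid up to the generic constant. There is no genuine analytical obstacle here; the entire argument is a two-line bookkeeping step once Lemma \ref{superapprox} and Theorem \ref{supersigma} are in hand. The only subtlety worth stressing is that the row-wise mean-value correction in $R_h$ (needed to land in $\bm{\Sigma}$) is harmless both for boundedness, since $\int_\Omega\tr(\widetilde{R}_h\bm{\xi}_h)$ is controlled by $\|\bm{\xi}_h\|$ via Cauchy--Schwarz, and for super-approximation, which was already verified in Lemma \ref{superapprox}. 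Hence all the heavy lifting has been done in the earlier supercloseness and super-approximation results, and the theorem follows immediately.
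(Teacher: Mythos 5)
Your decomposition is exactly the one the paper uses: insert $R_h\Pi_h\bm{\sigma}$ via the triangle inequality, bound $\|\bm{\sigma}-R_h\Pi_h\bm{\sigma}\|$ by Lemma \ref{superapprox}, and bound $\|R_h(\Pi_h\bm{\sigma}-\bm{\sigma}_h)\|$ by the $L^2$-boundedness of $R_h$ (inherited from \eqref{Rtildebd}, with the trace correction controlled by Cauchy--Schwarz) together with Theorem \ref{supersigma}. Structurally your argument coincides with the paper's proof.

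The one step that does not hold up is the final inference to the $\min$. You argue that since $h^{1+\alpha}$ and $h^{2}|\log h|^{1/2}$ ``each provide an upper bound, the smaller of the two is also valid.'' But the two rates bound \emph{different} summands: the recovery term is $O(h^{1+\alpha})$ and the supercloseness term is $O(h^{2}|\log h|^{1/2})$, so their sum is controlled by the \emph{maximum} (equivalently the sum) of the two rates, not the minimum. Nothing in the argument shows that $\|\bm{\sigma}-R_h\Pi_h\bm{\sigma}\|$ is also $O(h^{2}|\log h|^{1/2})$ when $\alpha<1$, nor that $\|\bm{\xi}_h\|$ is $O(h^{1+\alpha})$ when $\alpha>1$. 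What the triangle-inequality argument actually delivers is $\|\bm{\sigma}-\bm{\sigma}^*_h\|\lesssim\bigl(h^{1+\alpha}+h^{2}|\log h|^{1/2}\bigr)\bigl(\|\bm{\sigma}\|_{W^2_\infty(\Omega)}+\|\bm{u}\|_2\bigr)$. To be fair, the paper's own proof simply asserts the $\min$ without justification and has the same issue---indeed the discussion of Problem~1 in Section~\ref{secexp} quotes the rate $h^{2}|\log h|^{1/2}$ for $\alpha=1$, which is the maximum, not the minimum, of the two quantities---so you have reproduced the paper's argument faithfully, but the explicit justification you give for the $\min$ is not valid as stated.
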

\begin{proof}
Using the triangle inequality, Theorem \ref{supersigma}, and Lemma \ref{superapprox}, we have
\begin{equation*}
    \begin{aligned}
    \|\bm{\sigma}-\bm{\sigma}^*_h\|&\leq\|\bm{\sigma}-R_h\Pi_h\bm{\sigma}\|+\|R_h(\Pi_h\bm{\sigma}-\bm{\sigma}_h)\|\\
    &\lesssim\max(h^2|\log h|^\frac{1}{2},h^{1+\alpha})\big(\|\bm{\sigma}\|_{W^2_\infty(\Omega)}+\|\bm{u}\|_2\big),
    \end{aligned}
\end{equation*}
which completes the proof.
\qed\end{proof}
\begin{remark}
One could reconstruct accurate numerical symmetric stress and pressure from the superconvergent pseudostress $\bm{\sigma}_h^*$. In fact, let 
\begin{align*}
    \bm{\sigma}^{S*}_h:=\frac{1}{2}(\bm{\sigma}_h^*+\bm{\sigma}_h^{*\top}),\quad p_h^*:=-\frac{1}{d}{\rm Tr}\bm{\sigma}_h^*.
\end{align*}
We conclude that $\bm{\sigma}^{*S}_h$ superconverges to the symmetric stress $\bm{\sigma}^{S}$ and $p_h^*$ superconverges
to $p$ the pressure from Theorem \ref{recoverysigma} and the elementary inequalities
\begin{align*}
    \|\bm{\sigma}^S-\bm{\sigma}^{*S}_h\|&\leq\frac{1}{2}\|\bm{\sigma}-\bm{\sigma}^*_h\|+\frac{1}{2}\|\bm{\sigma}^\top-\bm{\sigma}^{*\top}_h\|=\|\bm{\sigma}-\bm{\sigma}^*_h\|,\\
    \|p-p^{*}_h\|&\leq\frac{1}{d}\|{\rm Tr}(\bm{\sigma}-\bm{\sigma}^*_h)\|\leq\frac{1}{\sqrt{d}}\|\bm{\sigma}-\bm{\sigma}^*_h\|.
\end{align*}
\end{remark}
\section{Experiments}\label{secexp}
In this section, the postprocessing procedure $\bm{\sigma}_h\mapsto\bm{\sigma}_h^*$ in Theorem \ref{recoverysigma} is based on the polynomial preserving recovery $\widetilde{R}_h$ for the lowest order RT element described in \cite{BankLi2019}. Roughly speaking, each row of $\bm{\sigma}_h^*$ is constructed as a continuous piecewise linear vector-valued polynomial with nodal values determined by least-squares fitted linear local polynomial on vertex patches. For linear Oseen equations, we set the viscosity $\nu$ to be $1$ and verify the a priori and superconvergence error estimates. The adaptivity performance of the error estimator base on $\bm{\sigma}_h^*$, $\bm{u}_h^*$ is also under investigation. In the end, the proposed postprocessing is tested in the Kovasznay flow from the Navier--Stokes equation.

\begin{table}[tbhp]
\caption{Convergence history of the lowest order $RT$ element}
\centering
\begin{tabular}{|c|c|c|c|c|c|c|}
\hline
nt & $\|\bm{u}-\bm{u}_{h}\|$&$\|\eh\|$
 &$\|\bm{u}-\bm{u}^*_{h}\|$
&  $\|\bm{\sigma}-\bm{\sigma}_{h}\|$ &  $\|\bm{\xi}_h\|$ &  $\|\bm{\sigma}-\bm{\sigma}_{h}^*\|$ \\
\hline
             19   &3.210e-1	&3.851e-2	&9.334e-2&1.372&2.505e-1&2.113\\
             76     &1.620e-1	&1.027e-2	&2.438e-2&6.849e-1&7.172e-2&6.108e-1\\
             304    &8.121e-2	&2.621e-3	&6.176e-3&3.418e-1&1.957e-2&1.724e-1\\
             1216      &4.063e-2	&6.598e-4	&1.550e-3&1.707e-1&5.242e-3&4.353e-2\\
             4864       &2.032e-2	&1.653e-4	&3.880e-4&8.530e-2&1.390e-3&1.088e-2\\
             19456       &1.016e-2	&4.136e-5	&9.703e-5&4.270e-2&3.659e-4&2.719e-3\\
\hline
order &9.990e-1&1.990&1.994&1.001&1.904&1.961\\
\hline
\end{tabular}
\label{RTtab}
\end{table}

\begin{table}[tbhp]
\caption{Convergence history of the lowest order $BDM$ element}
\centering
\begin{tabular}{|c|c|c|c|c|c|}
\hline
nt & $\|\bm{u}-\bm{u}_{h}\|$&$\|\eh\|$
 &$\|\bm{u}-\bm{u}^*_{h}\|$
&  $\|\bm{\sigma}-\bm{\sigma}_{h}\|$ &  $\|\bm{\xi}_h\|$\\
\hline
             19   &3.192e-1	&1.779e-2	&6.316e-02&4.480e-1&4.755e-1\\
             76     &1.618e-1	&6.026e-3	&1.645e-02 &1.249e-1&1.180e-1\\
             304    &8.118e-2	&1.636e-3	&4.155e-03&3.216e-2&2.928e-2\\
             1216      &4.062e-2	&4.181e-4	&1.042e-03&8.104e-3&7.288e-3\\
             4864       &2.032e-2	&1.051e-4	&2.606e-04 &2.031e-3&1.818e-3\\
             19456       &1.016e-2	&2.631e-5	&6.515e-05
  &5.081e-4&4.541e-4\\
\hline
order &9.986e-1&1.964&1.996&1.987&2.005\\
\hline
\end{tabular}
\label{BDMtab}
\end{table}
\subsection{A priori convergence} Consider the Oseen equation \eqref{Oseen1} on the unit square  $\Omega=[0,1]^2$  with the smooth solutions \begin{align*}
    \bm{u}&=\begin{pmatrix}\sin(\pi(x_1+x_2))\\-\sin(\pi(x_1+x_2))\end{pmatrix},\\
    p&=x_1+x_2-1.
\end{align*} 
We set $c=0$,  $\bm{b}=(\cos(x_2),\sin(x_1))^\top$,  $\bm{g}=\bm{u}|_{\partial\Omega}=\bm{0}$. $\bm{f}$ is computed from $\bm{u}$ and $\bm{b}$. We start with the initial partition in Figure \ref{pwuniform} (left). A sequence of piecewise uniform meshes is obtained by uniform quad-refinement, i.e., dividing each triangle into four similar subtriangles by connecting the midpoints of each edge. Numerical results are presented in Tables \ref{RTtab} and \ref{BDMtab}, where nt is short for ``number of triangles''. The order of convergence is computed from the error quantities in those tables by least squares without using the data in the first rows.

The numerical rates of convergence coincide with a priori error estimates \eqref{convrate}, \eqref{convrateBDM} and the  supercloseness estimates in \eqref{ratesuperu} and Theorem \ref{supersigma}. It is noted that for the lowest order BDM element, $\|\bm{\xi}_h\|\approx\mathcal{O}(h^2)$, which is predicted by a priori error estimates and thus not supersmall.
Since the recovery procedure in \cite{BankLi2019} provides the super-approximation rate $\alpha=1$ in \eqref{Rtildeapprox} and Lemma \ref{superapprox}, the recovery superconvergence estimate for the lowest order RT element predicted by Theorem \ref{recoverysigma} is 
$\|\bm{\sigma}-\bm{\sigma}_h^*\|=\mathcal{O}(h^2|\log h|^\frac{1}{2})$, numerically confirmed by the last column in Table \ref{RTtab}.

\begin{figure}[tbhp]
\centering
\includegraphics[width=12cm,height=5.0cm]{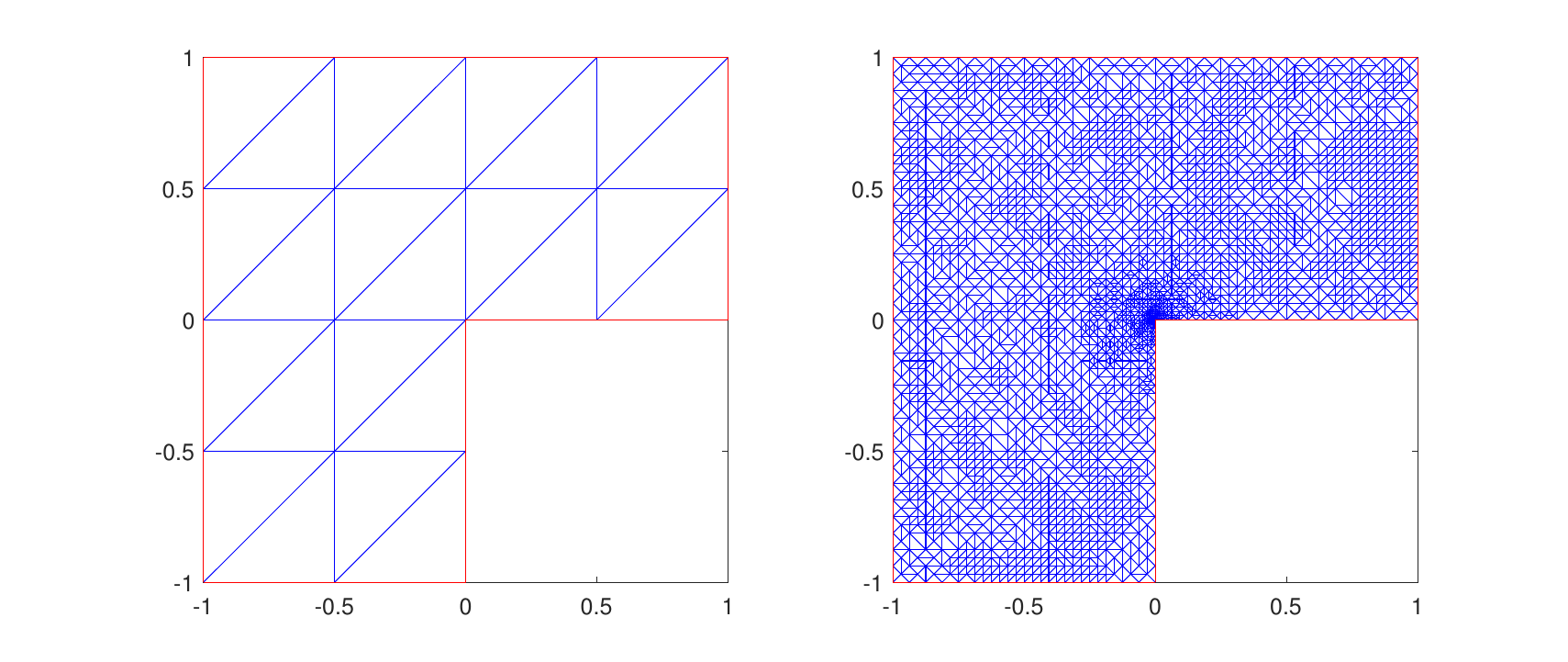}
\caption{Adaptive  mesh for the singular problem}
\label{adapt}
\end{figure}
\begin{figure}[tbhp]
\centering
\includegraphics[width=12cm,height=5.0cm]{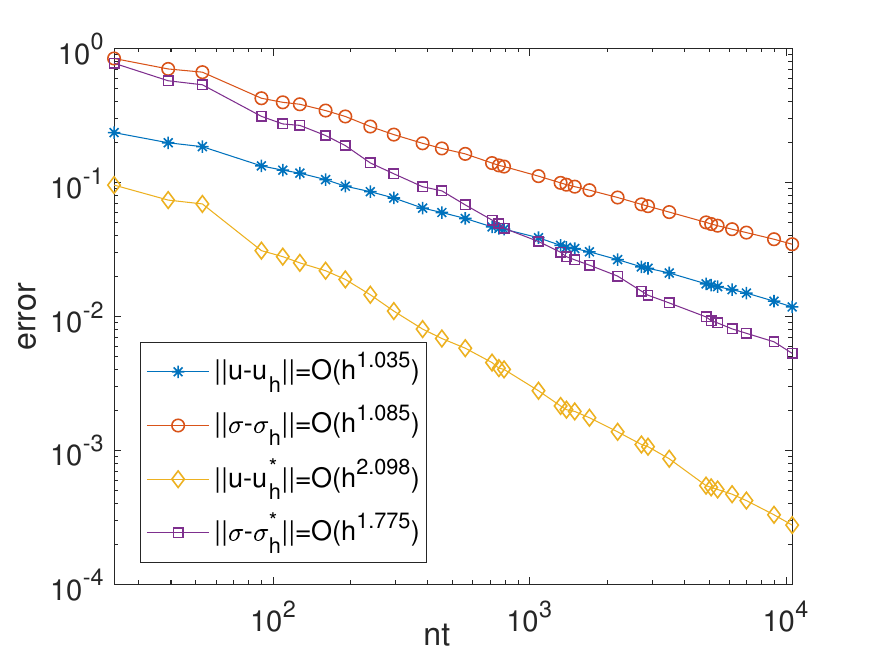}
\caption{Convergence history of the adaptive method}
\label{curve}
\end{figure}
\subsection{Adaptive mesh refinement for a non-smooth problem}\label{subsectionAMR} Consider  \eqref{Oseen1} on the L-shaped domain  $\Omega=[-1,1]^2\backslash([0,1]\times[-1,0])$ with the smooth pressure $p=x_1+x_2$ and singular velocity \begin{align*}
    \bm{u}&=\begin{pmatrix}r^\alpha\sin(\alpha\theta)\\r^\alpha\cos(\alpha\theta)\end{pmatrix},
\end{align*} 
where $\alpha=\frac{2}{3}$, $(r,\theta)$ is the polar coordinate w.r.t.~$(0,0)$.
Set $c=0$,  $\bm{b}=(1,2)^\top$ and $\bm{g}=\bm{u}|_{\partial\Omega}$. Direct calculation shows that $\nabla\cdot\bm{u}=0$ and $\bm{f}=(1,1)^\top+\bm{b}\cdot\nabla\bm{u}.$ In this experiment, we use the classical adaptive feedback loop (cf.~\cite{BabuskaRheinboldt1978,Dorfler1996,MNS2000})
$$\texttt{SOLVE}\rightarrow\texttt{ESTIMATE}\rightarrow\texttt{MARK}
\rightarrow\texttt{REFINE}$$ 
to obtain a sequence of adaptively refined grids $\{\mathcal{T}_{h_\ell}\}_{\ell\geq0}$ and numerical solutions $(\{\bm{\sigma}_{h_\ell},\bm{u}_{h_\ell})\}_{\ell\geq0}$. In particular, the algorithm starts from the initial grid $\mathcal{T}_{h_0}$ presented in Figure~\ref{adapt}(left). The module \texttt{ESTIMATE} computes the superconvergent recovery-based error indicator $$\mathcal{E}_\ell(K):=\big(\|\bm{\sigma}_{h_\ell}^*-\bm{\sigma}_{h_\ell}\|^2_{L^2(K)}+\|\bm{u}_{h_\ell}^*-\bm{u}_{h_\ell}\|^2_{L^2(K)}\big)^\frac{1}{2}$$
for each triangle $K\in\mathcal{T}_{h_\ell}$.
The module \texttt{MARK} then selects a collection of triangles 
\begin{equation}\label{mark}
\mathcal{M}_\ell:=\{K\in\mathcal{T}_\ell: \mathcal{E}_\ell(K)\geq0.7\max_{K^\prime\in\mathcal{T}_\ell}\mathcal{E}_\ell(K^\prime)\}
\end{equation}
to be refined by local quad-refinement. To remove the newly created hanging nodes, minimal number of neighboring elements of $\mathcal{M}_\ell$ are bisected and the next level triangulation $\mathcal{T}_{h_{\ell+1}} $ is generated. See Figure~\ref{adapt}(right) for an adaptively  refined triangulation.

It can be observed from Figure \ref{curve} that $(\bm{\sigma}_{h_\ell},\bm{u}_{h_\ell})$ optimally converges to $(\bm{\sigma},\bm{u})$. In addtion, the errors $\|\bm{\sigma}-\bm{\sigma}^*_{h_\ell}\|$ and $\|\bm{u}-\bm{u}^*_{h_\ell}\|$ are apparently superconvergent to 0. Let
\begin{align*}
\mathcal{E}_{\ell}&:=\big(\sum_{K\in\mathcal{T}_{h_\ell}}\mathcal{E}_{\ell}(K)^2\big)^\frac{1}{2}=\big(\|\bm{\sigma}^*_{h_\ell}-\bm{\sigma}_{h_\ell}\|^2+\|\bm{u}^*_{h_\ell}-\bm{u}_{h_\ell}\|^2\big)^\frac{1}{2},\\
E_\ell&:=\big(\|\bm{\bm{\sigma}}-\bm{\sigma}_{h_\ell}\|^2+\|\bm{u}-\bm{u}_{h_\ell}\|^2\big)^\frac{1}{2}.
\end{align*}
Due to the observed superconvergence phenomena and the triangle inequality
\begin{align*}
|\mathcal{E}_\ell-E_\ell|\leq\big(\|\bm{\bm{\sigma}}-\bm{\sigma}^*_{h_\ell}\|^2+\|\bm{u}-\bm{u}^*_{h_\ell}\|^2\big)^\frac{1}{2},
\end{align*} 
the recovery-based error estimator $\mathcal{E}_\ell$ is asymptotically exact, i.e.,
\begin{align*}
    \lim_{\ell\rightarrow\infty}\frac{\mathcal{E}_\ell}{E_\ell}=1.
\end{align*}

\begin{figure}[tbhp]
\centering
\includegraphics[width=12cm,height=5.0cm]{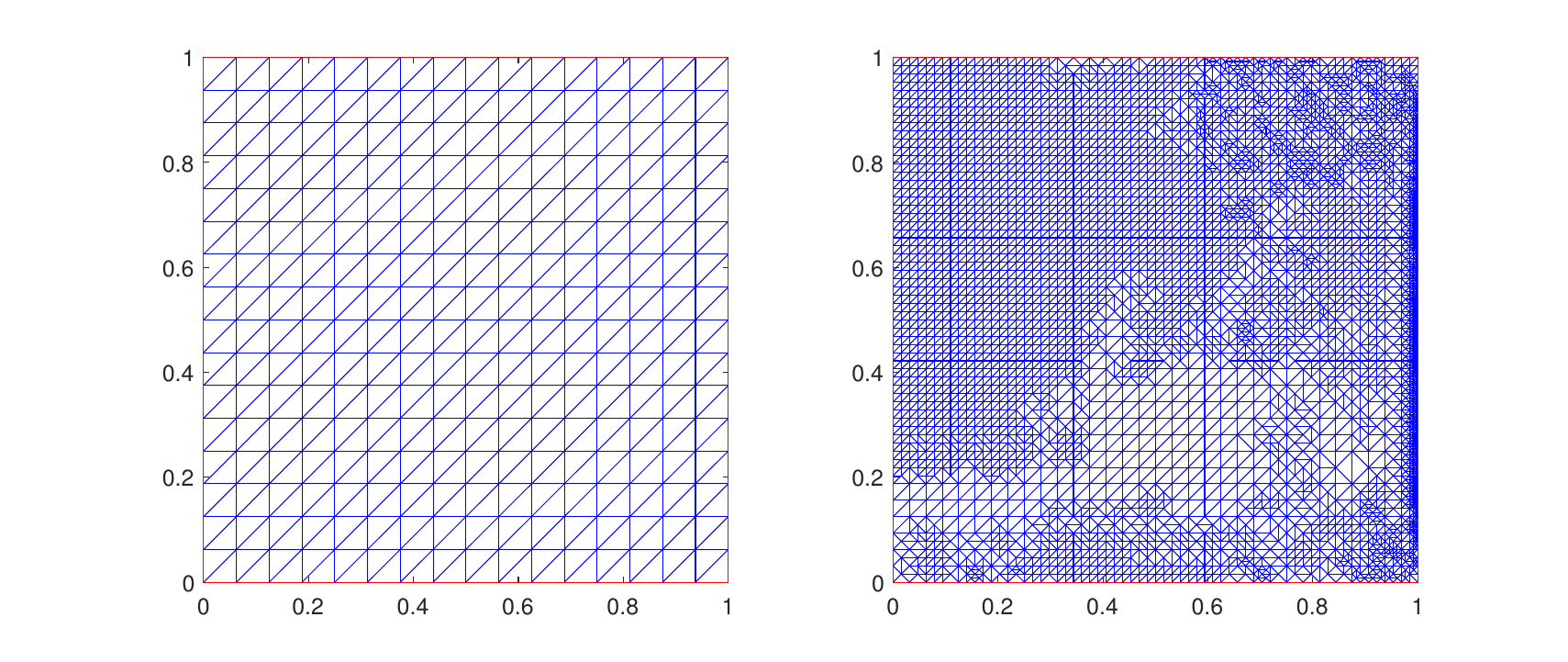}
\caption{(left) Initial grid. (right) Adaptive grid, 9683 triangles.}
\label{BoundaryLayer}
\end{figure}

\begin{figure}[tbhp]
\centering
\includegraphics[width=13cm,height=5.0cm]{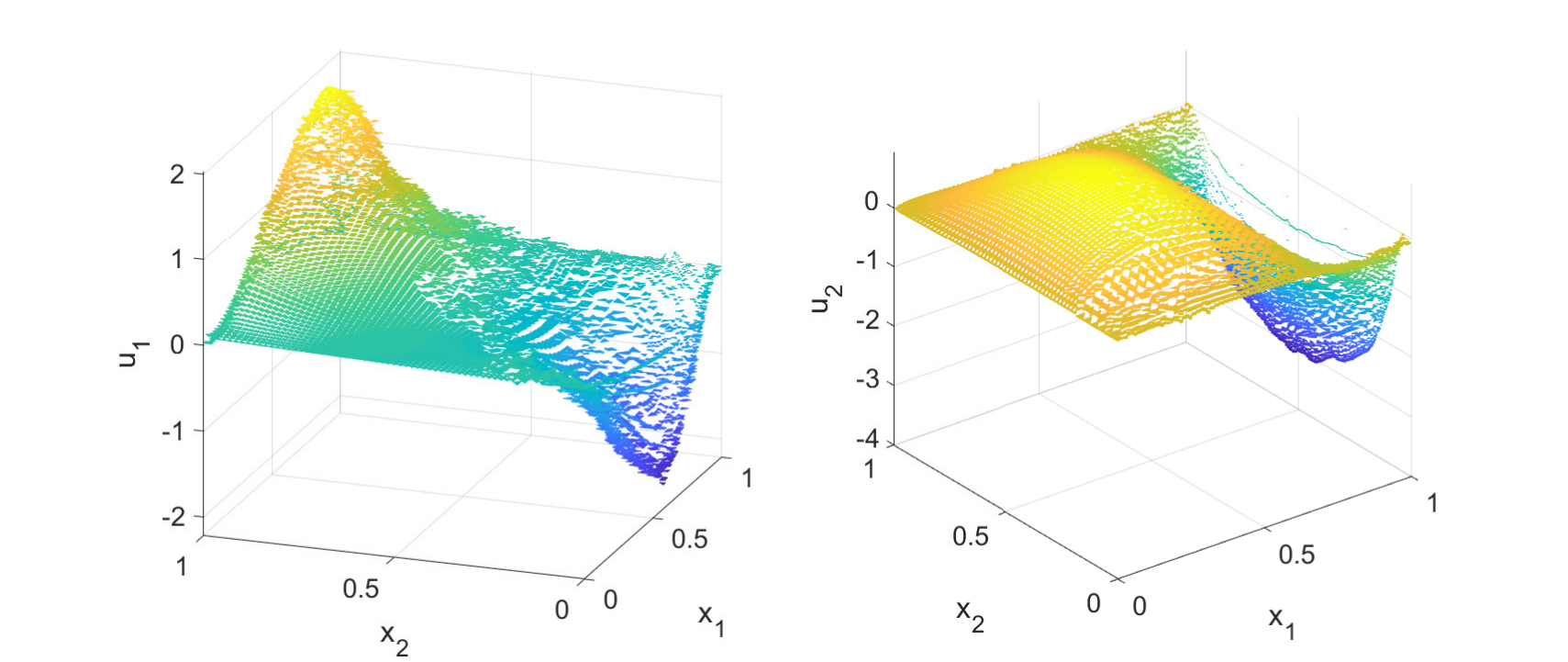}
\caption{Velocity field of the convection-dominated problem}
\label{u1u2}
\end{figure}
\subsection{Adaptive mesh refinement for dominant convection}
Consider the Oseen equation \eqref{Oseen1} on the unit square  $\Omega=[0,1]^2$ with $c=0$,  $\bm{b}=(500,1)^\top$,  $\bm{g}=\bm{0}$, $\bm{f}=5000(x_2,-x_1)^\top$.  In the end, we use the same adaptive algorithm in Subsection \ref{subsectionAMR} to solve this convection-dominated example. The initial grid is given in Figure \ref{BoundaryLayer} (left). The marked set in \eqref{mark} is replaced by
\begin{equation*}
\mathcal{M}_\ell=\{K\in\mathcal{T}_\ell: \mathcal{E}_\ell(K)\geq0.3\max_{K^\prime\in\mathcal{T}_\ell}\mathcal{E}_\ell(K^\prime)\}.
\end{equation*}
Due to the convection coefficient $\bm{b}=(500,1)^\top$, the exact solution $\bm{u}$ near the axis $x_1=1$ is rapidly changing to preserve the homogeneous Dirichlet boundary condition. It can be observed from Figures \ref{BoundaryLayer} and \ref{u1u2} that the adaptive mixed method  is able to capture the boundary layer by adaptively graded grids.

\subsection{Incompressible Navier--Stokes equation} 
Similarly to the linear Oseen equation, \eqref{NSE1} could be recast into the following  pseudostress-velocity form
\begin{equation}\label{NSE}
    \begin{aligned}
           (\mathcal{A}\bm{\sigma},\bm{\tau})+\nu(\nabla\cdot\bm{\tau},\bm{u})&=\nu\ab{\bm{g}}{\bm{\tau}\bm{n}},\quad\bm{\tau}\in\bm{\Sigma},\\ -\nu(\nabla\cdot
           \bm{\sigma},\bm{v})+(\bm{u}\cdot\mathcal{A}\bm{\sigma},\bm{v})&=\nu(\bm{f},\bm{v}),\quad\bm{v}\in\bm{V}.
    \end{aligned}
\end{equation}
Let $\Omega=[-0.5,1.5]\times[0,2]$.
The exact solution solutions are taken to be
\begin{equation}\label{exactup}
    \bm{u}=\begin{pmatrix}1-e^{\lambda x_1}\cos(2\pi x_2)\\\frac{\lambda}{2\pi}e^{\lambda x_1}\sin(2\pi x_2)\end{pmatrix},\quad p=-\frac{1}{2}e^{2\lambda x_1}+\frac{1}{8\lambda}(e^{3\lambda}-e^{-\lambda}),
\end{equation}
where $\lambda=\frac{1}{2\nu}-\sqrt{\frac{1}{4\nu^2}+4\pi^2}$, $\nu=0.025$.  In fact \eqref{exactup} is a well-known benchmark problem known as the Kovasznay flow (cf.~\cite{DiPietroErn2012,ChenLiCorinaCimbala2020}). 
In this experiment, we use the lowest order RT element $\bm{\Sigma}_h\times\bm{V}_h$ to discretize \eqref{NSE}:
\begin{equation*}
    \begin{aligned}
           (\mathcal{A}\bm{\sigma}_h,\bm{\tau})+\nu(\nabla\cdot\bm{\tau},\bm{u}_h)&=\nu\ab{\bm{g}}{\bm{\tau}\bm{n}},\quad\bm{\tau}\in\bm{\Sigma}_h,\\ -\nu(\nabla\cdot
           \bm{\sigma}_h,\bm{v})+(\bm{u}\cdot\mathcal{A}\bm{\sigma}_h,\bm{v})&=\nu(\bm{f},\bm{v}),\quad\bm{v}\in\bm{V}_h.
    \end{aligned}
\end{equation*}
The initial triangulation is a uniform partition of the square $\Omega$ with 512 right triangles. A sequence of grids is then generated by subdividing each triangles into four congruent subtriangles. 

Although our analysis is devoted to the linear Oseen equation, one could observe apparent superconvergence in both pseudostress and velocity of the Navier--Stokes equation from Table \ref{NStab}.

\begin{table}[tbhp]
\caption{Convergence history of the lowest order $RT$ element for \eqref{NSE}}
\centering
\begin{tabular}{|c|c|c|c|c|c|c|}
\hline
nt & $\|\bm{u}-\bm{u}_{h}\|$&$\|\eh\|$
 &$\|\bm{u}-\bm{u}^*_{h}\|$
&  $\|\bm{\sigma}-\bm{\sigma}_{h}\|$ &  $\|\bm{\xi}_h\|$ &  $\|\bm{\sigma}-\bm{\sigma}_{h}^*\|$ \\
\hline

             512    &2.767e-1	&1.717e-1	&1.984e-1&1.572e-1&3.317e-2&1.311e-1\\
             2048      &1.225e-1	&5.578e-2	&6.133e-2&6.094e-2&8.866e-3&3.765e-2\\
             8192       &5.902e-2 	 &2.233e-2 	 &2.326e-2&2.716e-2&2.410e-3&1.063e-2\\
             32768      &2.920e-2 	 &1.030e-2 	 &1.043e-2&1.303e-2&6.509e-4&2.960e-3\\
\hline
order &1.079&1.350&1.415&1.194&1.889&1.823\\
\hline
\end{tabular}
\label{NStab}
\end{table}

\section*{Appendix}
\begin{figure}[tbhp]
\centering
\includegraphics[width=12cm,height=5.0cm]{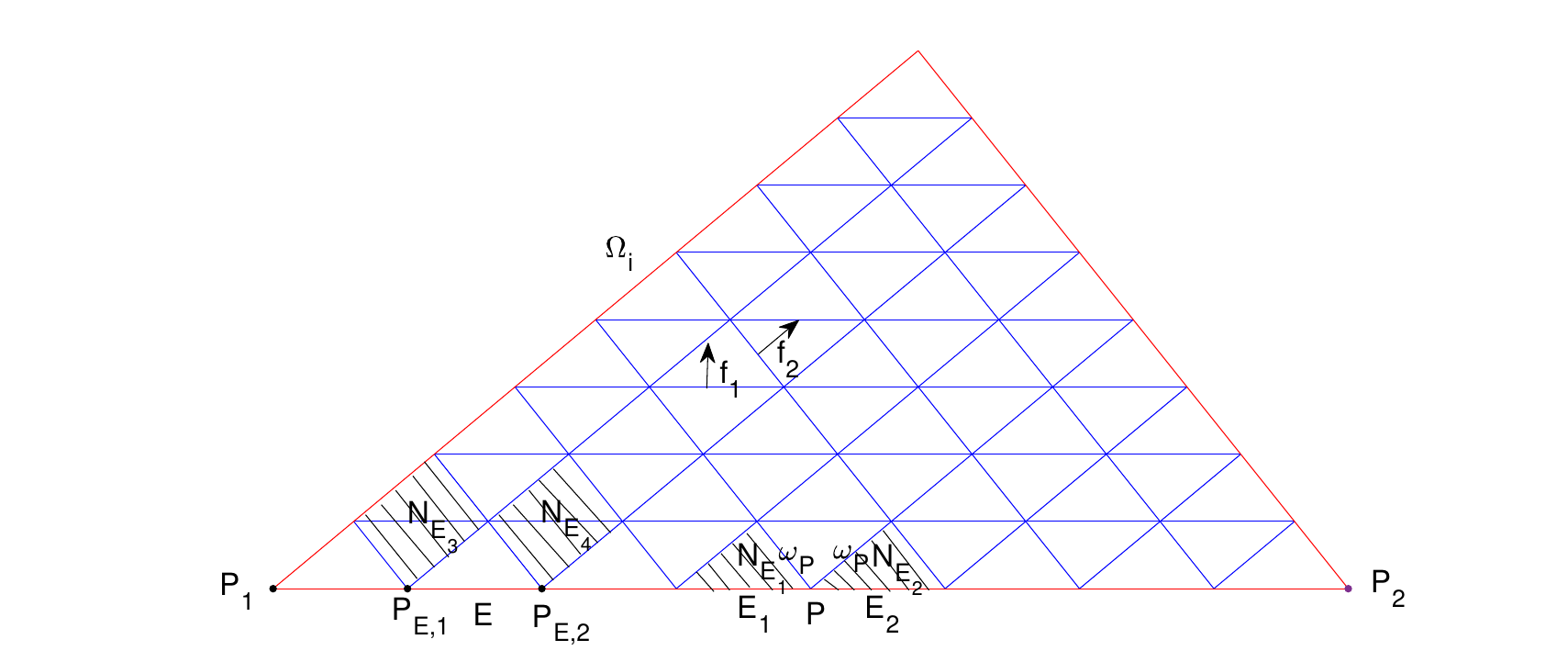}
\caption{A uniform grid of $\Omega_i.$}
\label{uniform}
\end{figure}


\begin{proof}[Proof of Lemma \ref{varerror}]
Let $\{\Omega_{i}\}_{i=1}^N$ be a polygonal partition of $\Omega$ and $\Th|_{\Omega_i}$ be a uniform grid for each $i$. Let $\bm{f}_1$, $\bm{f}_2$ be fixed unit normals to two edges of an arbitrary but \emph{fixed} triangle in $\Th|_{\Omega_i}$.  Let $\bm{S}=\begin{pmatrix}\bm{f}_1^\top\\\bm{f}_2^\top\end{pmatrix}$ and recall $\bm{e}_1=(1,0)$, $\bm{e}_2=(0,1).$  We have 
\begin{equation}\label{totalvar}
\begin{aligned}
&\int_{\Omega_i}[\bm{\bm{D}}(\bm{v}-\widetilde{\Pi}_h\bm{v})]\cdot\nabla^\perp w_hdx=\int_{\Omega_i}\bm{S}^{-\top}[\bm{\bm{D}}(\bm{v}-\widetilde{\Pi}_h\bm{v})]\cdot\bm{S}\nabla^\perp w_hdx\\
&\qquad=\sum_{j=1}^2\int_{\Omega_i}[\bm{e}_j\bm{S}^{-\top}\bm{\bm{D}}(\bm{v}-\widetilde{\Pi}_h\bm{v})](\bm{f}_j^\top\nabla^\perp w_h)dx:=\sum_{j=1}^2I_j.
\end{aligned}
\end{equation}
For $j=1,2$, let $\mathcal{E}^o_j$ and $\mathcal{E}^\partial_j$ be the set of interior (inside $\Omega_i$) and boundary (on $\partial\Omega_i$) edges orthogonal to $\bm{f}_j$, respectively. Let $N_{E}$ denote the region which is the union of triangles sharing $E$ as an edge. To estimate $I_1$, let $\Omega_i$ be partitioned into parallelograms $N_E$ with $E\in\mathcal{E}_1^o$ and boundary triangles $N_E$ with $E\in\mathcal{E}^\partial_1$, see Figure \ref{uniform}, where $E_1, E_2\in\mathcal{E}_1^\partial$, $E_3, E_4\in\mathcal{E}_1^o$. For any $E\in\mathcal{E}_1^o$, note that $\bm{f}_1^\top\nabla^\perp w_h$ is single-valued across $E$ and thus constant on $N_E.$ For $E\in\mathcal{E}_1^\partial$, let $P_{E,1}$ and $P_{E,2}$ be the two endpoints of $E$ and $h_E$ denote the length of $E.$ Then
\begin{equation}
\begin{aligned}
I_1&=\sum_{E\in\mathcal{E}^o_1\cup\mathcal{E}^\partial_1}\int_{N_E}[\bm{e}_1\bm{S}^{-\top}\bm{\bm{D}}(\bm{v}-\widetilde{\Pi}_h\bm{v})](\bm{f}_1^\top\nabla^\perp w_h)dx\\
&=\sum_{E\in\mathcal{E}^o_1}(\bm{f}_1^\top\nabla^\perp w_h)\bm{e}_1\bm{S}^{-\top}\bm{\bm{D}}\int_{N_E}(\bm{v}-\widetilde{\Pi}_h\bm{v})dx\\
&+\sum_{E\in\mathcal{E}^\partial_1}h_E^{-1}\bm{e}_1\bm{S}^{-\top}\bm{\bm{D}}\int_{N_E}(\bm{v}-\widetilde{\Pi}_h\bm{v})dx\big(w_h(P_{E,2})-w_h(P_{E,1})\big)\\
&:=I_{11}+I_{12}.
\end{aligned}
\end{equation}
When $N_E$ is an exact parallelogram, it is shown in Equation (3.15) of \cite{Brandts1994} that
\begin{equation}\label{cancel1}
\left|\int_{N_E}(\bm{v}-\widetilde{\Pi}_h\bm{v})dx\right|\lesssim h^3|\bm{v}|_{H^2(N_E)}.
\end{equation}
Using \eqref{cancel1} and the Cauchy-Schwarz inequality, we have
\begin{equation}\label{I11}
\begin{aligned}
I_{11}&\lesssim\sum_{E\in\mathcal{E}^o_1}h^{-1}\|\nabla^\perp w_h\|_{L^2(N_E)}\left|\int_{N_E}(\bm{v}-\widetilde{\Pi}_h\bm{v})dx\right|\\
&\lesssim\sum_{E\in\mathcal{E}^o_1}h^2\|\nabla^\perp w_h\|_{L^2(N_E)}|\bm{v}|_{H^2(N_E)}\lesssim h^2\|\nabla^\perp w_h\| |\bm{v}|_2.
\end{aligned}
\end{equation}
Let $\mathcal{N}$ denote the collection of endpoints of edges in $\mathcal{E}_1^\partial$,  
$$\mathcal{N}_1:=\{P\in\mathcal{N}: \text{ $P$ is not a corner of $\partial\Omega_i$}\},$$ and $\mathcal{N}_2:=\mathcal{N}\backslash\mathcal{N}_1$. For instance, $\mathcal{N}_2=\{P_1, P_2\}$ in Figure \ref{uniform}. For $P\in\mathcal{N}_1,$ let $E_1$, $E_2$ be the two boundary edges sharing $P$ and $\omega_P$ be the union of three triangles having $P$ as a vertex. For $P\in\mathcal{N}_2,$ let $E$ denote the unique edge in $\mathcal{E}_1^\partial$ having $P$ as a vertex.  Rearranging the summation in $I_{12}$, we have
\begin{equation}\label{I12inter}
\begin{aligned}
I_{12}&=\sum_{P\in\mathcal{N}_1}h_{E_1}^{-1}\bm{e}_1\bm{S}^{-\top}\bm{\bm{D}}\left(\int_{N_{E_1}}(\bm{v}-\widetilde{\Pi}_h\bm{v})dx-\int_{N_{E_2}}(\bm{v}-\widetilde{\Pi}_h\bm{v})dx\right) w_h(P)\\
&+\sum_{P\in\mathcal{N}_2}h_E^{-1}\bm{e}_1\bm{S}^{-\top}\bm{\bm{D}}\int_{N_{E}}(\bm{v}-\widetilde{\Pi}_h\bm{v})dx w_h(P).
\end{aligned}
\end{equation}
For $P\in\mathcal{N}_1$, Equation (3.15) in \cite{HuMaMa2021} shows that
\begin{equation}\label{cancel2}
\left|\int_{N_{E_1}}(\bm{v}-\widetilde{\Pi}_h\bm{v})dx-\int_{N_{E_2}}(\bm{v}-\widetilde{\Pi}_h\bm{v})dx\right|\lesssim h^3|\bm{v}|_{H^2(\omega_P)}.
\end{equation}
Without loss of generality, let $\int_\Omega w_hdx=0.$ Then the discrete Sobolev and Poincar\'e inequalities yield
\begin{equation}\label{disSob}
\|w_h\|_{L^\infty(\Omega)}\lesssim|\log h|^\frac{1}{2}\|w_h\|_1\lesssim|\log h|^\frac{1}{2}\|\nabla^\perp w_h\|.
\end{equation}
Standard interpolation error estimate yields
\begin{equation}\label{stand}
\left|\int_{N_{E}}(\bm{v}-\widetilde{\Pi}_h\bm{v})dx\right|\lesssim h^2|\bm{v}|_{H^1(N_E)}.
\end{equation}
It then follows from \eqref{I12inter}--\eqref{stand}, and the Cauchy--Schwarz inequality that
\begin{equation}\label{I12}
\begin{aligned}
I_{12}&\lesssim\big(\sum_{P\in\mathcal{N}_1}h^2|\bm{v}|_{H^2(\omega_P)}+\sum_{P\in\mathcal{N}_2}h|\bm{v}|_{H^1(N_E)}\big)\|w_h\|_{L^\infty(\Omega)}\\
&\lesssim\big(h^2|\bm{v}|_{W^2_\infty(\Omega)}\sum_{P\in\mathcal{N}_1}|\omega_P|^\frac{1}{2}+h|\bm{v}|_{W^1_\infty(\Omega)}\sum_{P\in\mathcal{N}_2}|N_E|^\frac{1}{2}\big)|\log h|^\frac{1}{2}\|\nabla^\perp w_h\|\\
&\lesssim h^2\|\bm{v}\|_{W^2_\infty(\Omega)}|\log h|^\frac{1}{2}\|\nabla^\perp w_h\|.
\end{aligned}
\end{equation}
We finally conclude the proof from \eqref{totalvar}, \eqref{I11}, \eqref{I12}, the same analysis for $I_2$ and all pieces in the partition $\{\Omega_i\}_{i=1}^N$.
\qed\end{proof}

\section*{Declarations}
\vspace{0.2cm}
\noindent\textbf{Funding} The authors did not receive support from any organization for this work.

\vspace{0.2cm}
\noindent\textbf{Data Availability} Data sharing is not applicable to this article as no datasets were generated or analysed during
the current study.

\vspace{0.2cm}
\noindent\textbf{Conflicts of interest} The authors have no relevant financial or non-financial interests to disclose.

\vspace{0.2cm}
\noindent\textbf{Code availability} The code used in this study is available from the authors upon request.

\bibliographystyle{spmpsci}

\end{document}